\definecolor{vert}{rgb}{0,0.6,0}
\newcommand{\N}{\mathbb{N}}
\newcommand{\R}{\mathbb{R}}
\newcommand{\Z}{\mathbb{Z}}
\newcommand{\cA}{\mathcal{A}}
\newcommand{\cR}{\mathcal{R}}
\newcommand{\gam}{\gamma}
\newcommand{\ep}{\varepsilon}
\newcommand{\Lip}{{\rm Lip\,}}
\newcommand{\ol}{\overline}
\newcommand{\argmax}{\operatorname{argmax}}
\newcommand{\Div}{{\rm div}\,}
\renewcommand{\div}{\operatorname{div}}
\def\leq{\leqslant}
\def\geq{\geqslant}
\numberwithin{equation}{section}
\newtheoremstyle{thmlemcorr}{10pt}{10pt}{\itshape}{}{\bfseries}{.}{10pt}{{\thmname{#1}\thmnumber{
#2}\thmnote{ (#3)}}}
\newtheoremstyle{thmlemcorr*}{10pt}{10pt}{\itshape}{}{\bfseries}{.}\newline{{\thmname{#1}\thmnumber{
\newtheoremstyle{defi}{10pt}{10pt}{\itshape}{}{\bfseries}{.}{10pt}{{\thmname{#1}\thmnumber{
#2}\thmnote{ (#3)}}}
\newtheoremstyle{remexample}{10pt}{10pt}{}{}{\bfseries}{.}{10pt}{{\thmname{#1}\thmnumber{
#2}\thmnote{ (#3)}}}
\newtheoremstyle{ass}{10pt}{10pt}{}{}{\bfseries}{.}{10pt}{{\thmname{#1}\thmnumber{
A#2}\thmnote{ (#3)}}}
\theoremstyle{thmlemcorr}
\newtheorem{theorem}{Theorem}
\numberwithin{theorem}{section}
\newtheorem{lemma}[theorem]{Lemma}
\newtheorem{proposition}[theorem]{Proposition}
\theoremstyle{thmlemcorr*}
\newtheorem{theorem*}{Theorem}
\newtheorem{lemma*}[theorem]{Lemma}
\newtheorem{corollary*}[theorem]{Corollary}
\newtheorem{proposition*}[theorem]{Proposition}
\newtheorem{problem*}[theorem]{Problem}
\newtheorem{conjecture*}[theorem]{Conjecture}
\theoremstyle{defi}
\newtheorem{definition}[theorem]{Definition}
\theoremstyle{remexample}
\newtheorem{remark}{Remark}
\newtheorem{example}[theorem]{Example}
\theoremstyle{plain}
\newtheorem{thm}[theorem]{Theorem}
\theoremstyle{ass}
\begin{document}

\title[Forced mean curvature flow with evolving spirals]{Asymptotic growth rate of solutions to level-set forced mean curvature flows with evolving spirals}

\author{Hiroyoshi Mitake}
\address[H. Mitake]{
	Graduate School of Mathematical Sciences, 
	University of Tokyo 
	3-8-1 Komaba, Meguro-ku, Tokyo, 153-8914, Japan}
\email{mitake@g.ecc.u-tokyo.ac.jp}

\author{Hung V. Tran}
\address[Hung V. Tran]
{
	Department of Mathematics, 
	University of Wisconsin Madison, Van Vleck hall, 480 Lincoln drive, Madison, WI 53706, USA}
\email{hung@math.wisc.edu}

\thanks{
	The work of HM was partially supported by the JSPS grants: KAKENHI \#22K03382, \#21H04431, \#20H01816, \#19H00639.
	The work of HT was partially supported by  NSF CAREER grant DMS-1843320 and a Vilas Faculty Early-Career Investigator Award.
}

\date{\today}
\keywords{Level-set forced mean curvature flows; Neumann boundary problem; global Lipschitz regularity; large time behavior; spiral crystal growth; dislocations}
\subjclass[2010]{
	35B40, 
	49L25, 
	53E10, 
	35B45, 
	35K20, 
	35K93, 
}

\begin{abstract}
Here, we study a level-set forced mean curvature flow with evolving spirals and the homogeneous Neumann boundary condition, which appears in a crystal growth model.
Under some appropriate conditions on the forcing term, we prove that the solution is globally Lipschitz.
We then study the large time average of the solution and deduce the asymptotic growth rate of the crystal.
Some large time behavior results of the solution are obtained.

\end{abstract}

\maketitle


\section{Introduction} \label{sec:intro}
\subsection{A spiral crystal growth model}
Various spiral patterns are observed in many crystal growth situations in practice in which the spirals' centers are often believed to be the locations of screw dislocations.
In this growth mechanism, the crystal surface has discontinuities in height, which are called steps, along curves that spiral out from these centers. 
Atoms bond with the crystal structure with a higher probability near each step, which results in a surface evolution of this step with normal velocity
\begin{equation}\label{surface-ev}
V=v_\infty(\rho_c \kappa+1),  
\end{equation}
where $\kappa$ is the curvature of the step.
 The constants $\rho_c>0$ and $v_\infty>0$ are the critical radius and the step velocity \cite{BCF}, respectively. 
 By rescaling, we may assume that $v_\infty\rho_c=1$, and thus, \eqref{surface-ev} becomes $V = \kappa + v_\infty$.
 See Figure \ref{fig.spiral} for an example of a spiral crystal growth with only one screw dislocation.

 \begin{center} 
\includegraphics[width=6in]{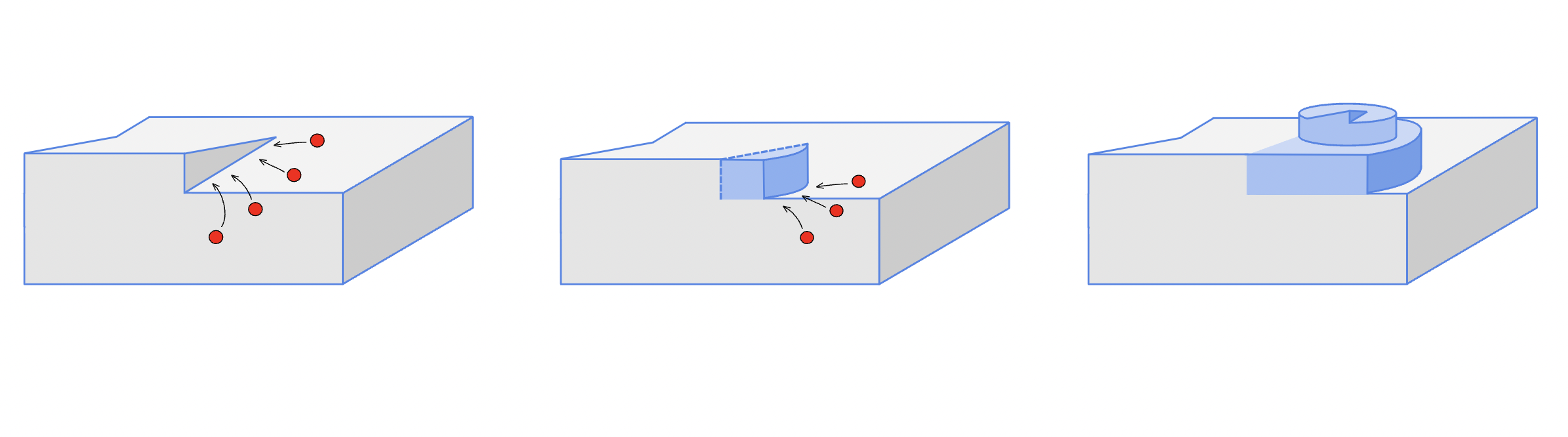}
\captionof{figure}{Spiral crystal growth with a screw dislocation.}
\label{fig.spiral}
\end{center}

We now give a more precise description of the spirals and the set up of the problem following \cite{Oh, OTG1, OTG2}.
Let $\Omega \subset \R^2$ be a bounded domain with smooth boundary.
Let $N\in \N$ and $a_1,\ldots, a_N \in \Omega$ be the centers of the spirals with $a_i \neq a_j$ for $i\neq j$.
Fix $r>0$ sufficiently small so that $\ol B(a_j,r) \subset \Omega$ for $1\leq j \leq N$, 
and $\ol B(a_i,r) \cap \ol B(a_j,r) =\emptyset$ for $i \neq j$.
Let
\begin{align*}
& 
W:=\Omega\setminus \bigcup_{j=1}^N B(a_j,r) \subset\R^2, \\ 
&
\theta(x):= 
\sum_{j=1}^N m_j\arg(x-a_j),
\end{align*}
where $m_j\in\Z \setminus\{0\}$ is given for $1\leq j \leq N$.
The constant $m_j=m_j^+-m_j^-$  with $m_j^\pm \in \N\cup\{0\}$ is the strength of the spiral center $a_j$, which is the difference between the strength $m_j^+$ of counterclockwise rotating spirals and $m_j^-$ of clockwise rotating spirals.
Note that $\theta$ is a multi-valued function and it is not well defined at $a_1,\ldots, a_N$, and this is the reason why we consider the problem in $W$ instead of $\Omega$.
Although $\theta$ is multi-valued, $D\theta$ is single-valued. 
Indeed, if $f(x)=\arg(x)$, then we can view that
$f(x)=\arctan(x_2/x_1)+2\pi l$ for some $l\in\Z$. 
Thus, 
\[
f_{x_1}=\frac{1}{1+\left(\frac{x_2}{x_1}\right)^2}\cdot\left(-\frac{x_2}{(x_1)^2}\right)
=\frac{-x_2}{|x|^2}, \quad
f_{x_2}=\frac{1}{1+\left(\frac{x_2}{x_1}\right)^2}\cdot\left(\frac{1}{x_1}\right)
=\frac{x_1}{|x|^2}. 
\]
Hence, for $a_j=(a_{j,1}, a_{j,2})$ for $1\leq j \leq N$.
\[
D\theta(x)=
\sum_{j=1}^N
\frac{m_j (-(x_2-a_{j,1}), x_1-a_{j,2})}{|x-a_j|^2}
=
\sum_{j=1}^N
\frac{m_j (x-a_j)^\perp}{|x-a_j|^2}.
\]
As noted, $D\theta$ is well defined and smooth in $W$. 
For an auxiliary function $u:W \times [0,\infty) \to \R$, spirals are implicitly defined as, for $t\geq 0$,
\begin{equation}\label{eq:Gamma_t}
\Gamma_t =\{x\in \ol W\,:\, u(x,t)-\theta(x) =2\pi k \text{ for some } k\in \Z\}.
\end{equation}

 In this paper, we consider a more general setting in which the step velocity might be non homogeneous, that is, $v_\infty = c(x)$ for $x\in W$.
 Then, $V=\kappa + c(x)$, and the auxiliary function $u$ satisfied the following level-set forced mean curvature flow PDE
\begin{equation}\label{eq:C}
\begin{cases}
u_t =|D(u-\theta)| \left(\Div\left(\frac{D(u-\theta)}{|D(u-\theta)|}\right)+c(x)\right) \qquad &\text{ in } W\times(0,\infty), \\
D(u-\theta)\cdot \mathbf{n}=0  \qquad &\text{ on } \partial W\times(0,\infty),\\
u(\cdot,0)=u_0 \qquad &\text{ on }  \ol W,
\end{cases}
\end{equation}
Here, ${\mathbf{n}}$ is the outward unit normal vector to $\partial W$. 
Throughout this paper, we assume that $c \in C^1(\ol W, (0,\infty))$ and $u_0\in C^{2}(\overline{W})$ with $D(u_0-\theta)\cdot \mathbf{n}=0$ on $\partial W$   for compatibility. 
The wellposedness of viscosity solutions to \eqref{eq:C} was obtained in \cite{Oh}.
The uniqueness of level sets $\{\Gamma_t\}_{t>0}$ with respect to an initial curve $\Gamma_0$ was proved in \cite{GNOh}.

Our main goals are to study the Lipschitz regularity, the large time average and behavior of the auxiliary function $u$, and to deduce the asymptotic growth rate of the crystal.

\subsection{Main results}
We now describe our main results. 
Denote by
\begin{equation*}
\begin{cases}
C_0:=\max\{-\lambda\,:\,\lambda\ \textrm{is the curvature of $\partial W$ at $x_0$ for $x_0 \in \partial  W$}\}\in\mathbb{R},\\
K_0:=\min\{d\,:\, d\  \textrm{is the diameter of an open ball inscribed in}\ W\}>0.
\end{cases}
\end{equation*}
In our setting, it is clear that
\[
C_0 =\max\left\{C_1,\frac{1}{r}\right\}>0
\]
where
\[
C_1:=\max\{-\lambda\,:\,\lambda\ \textrm{is a curvature of $\partial \Omega$ at $x_0$ for $x_0 \in \partial  \Omega$}\}\in\mathbb{R}.
\]
In particular, if $\Omega$ is convex, then $C_0=1/r$.

We first have the following uniform Lipschitz estimate for $u$, which is strongly inspired by \cite{JKMT}.
\begin{theorem}\label{thm:Lip-u}
Assume that  there exists $\delta>0$ such that
\begin{equation}\label{condition:c}
c(x)^2-2|Dc(x)|-2C_0|c(x)|-\frac{8C_0}{K_0} \geq \delta \qquad\text{ for all } x\in W.
\end{equation}
Let $u$ be the unique viscosity solution to \eqref{eq:C}.
 Then, there exists a constant $L>0$ depending only on  $\|u_0\|_{C^2(\ol W)}$, $\|D\theta\|_{C^1(\ol W)}$, $\|c\|_{C^1(\ol W)}$,  the constants $C_0$, $K_0$, and $\delta$ such that
\begin{equation}\label{eq:lip}
\|u_t\|_{L^\infty(W\times [0,\infty))} + \|Du\|_{L^\infty(W\times [0,\infty))} \leq L.
\end{equation}
\end{theorem}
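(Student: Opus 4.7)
The plan is to split the estimate \eqref{eq:lip} into a bound on $u_t$ (by a linear maximum principle) and a bound on $Du$ (by Bernstein's method on a suitable auxiliary quantity), tracking where the precise constants $2C_0|c|$ and $8C_0/K_0$ in the hypothesis \eqref{condition:c} come from. It is convenient to work with the formally-defined quantity $v := u - \theta$: although multi-valued, its derivatives $Dv = Du - D\theta$ and $v_t = u_t$ are single-valued and smooth on $W$, and $v$ satisfies the forced mean-curvature level-set equation with homogeneous Neumann data.

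For the time derivative I would differentiate the equation in $t$ and observe that $w := u_t$ satisfies a linear, degenerate parabolic equation whose coefficients depend on $Dv$ and $D^2v$; the Neumann condition for $v$ propagates to $Dw \cdot \mathbf n = 0$ on $\pl W$. Since $u_0 \in C^2(\ol W)$ satisfies the compatibility $D(u_0-\theta)\cdot \mathbf n = 0$, the initial value $\|w(\cdot,0)\|_{L^\infty}$ is controlled by $\|u_0\|_{C^2}$, $\|D\theta\|_{C^1}$ and $\|c\|_{L^\infty}$. A doubling-variable comparison argument at the viscosity-solution level then gives a time-uniform bound on $\|u_t\|_{L^\infty}$.

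For the spatial gradient I would use Bernstein's method on $\phi := \tfrac{1}{2}|Dv|^2$, after first regularising by adding $\epsi\Delta v$ on the right-hand side of the equation and passing $\epsi\to 0$ in the final estimate. Differentiating the PDE in $x_k$ and contracting with $D_k v$ yields an evolution of the schematic form
\begin{equation*}
\phi_t \;=\; \Ll\phi \;+\; |Dv|\, Dv \cdot Dc \;+\; (\text{reaction of size } c^2 |Dv|^2),
\end{equation*}
where $\Ll$ is the linearised geometric operator, non-positive when applied to $\phi$ at an interior space-time maximum. The positive contribution $c^2|Dv|^2$ coming from differentiating $|Dv|\, c$ and the negative contribution $-|Dv||Dc|$ combine precisely through $c^2 - 2|Dc|$, which is the first pair of terms in \eqref{condition:c}. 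If the maximum of $\phi$ is attained in $W \times (0,T]$ this already delivers a uniform bound on $|Dv|$ depending only on $\delta$ and the data.

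The delicate case is when the maximum lies on $\pl W \times (0,T]$. There I would replace $\phi$ by $\Phi := \phi - M h(x)$, where $h$ is a fixed smooth auxiliary function built from a ball of diameter $K_0$ inscribed in $W$ and satisfying $\pl h/\pl \mathbf n \geq 1$ on $\pl W$ together with $|D^2 h| \leq C/K_0$ on $\ol W$. Differentiating the identity $Dv \cdot \mathbf n = 0$ tangentially along $\pl W$ produces the standard second fundamental form correction bounded by $C_0|Dv|^2$ (with $C_0$ encompassing both $\pl \Omega$ and the inner circles $\pl B(a_j,r)$); combined with the extra Hessian contribution of $h$ in the Bernstein identity, this generates exactly the terms $2C_0|c|$ and $8C_0/K_0$ that must be absorbed by the interior coercivity $c^2 - 2|Dc|$. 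This balancing is the technical heart of the argument, adapted from \cite{JKMT}, and is where I expect the main obstacle to lie: the multiplier $M$, the choice of $h$, and the way second derivatives of $|Dv|^2$ interact with the Neumann data at a boundary maximum have to be tracked carefully so that the inequality \eqref{condition:c} — with its specific coefficients $2C_0$ and $8C_0/K_0$ — is precisely what closes the estimate. Once this boundary analysis is done, combining the interior and boundary bounds with the $u_t$ estimate yields \eqref{eq:lip} uniformly in time.
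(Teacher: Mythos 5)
Your overall structure (bound $u_t$ first, then run Bernstein on a gradient quantity, split into interior and boundary maxima) matches the paper, but there is a genuine gap in the interior Bernstein step, which is the crux of the argument. You assert that the coercive $c^2|Dv|^2$ term ``comes from differentiating $|Dv|\,c$.'' It does not: differentiating the forcing $c\,|Dv|$ in $x_k$ and contracting with $v_k$ produces only $|Dv|\,Dv\cdot Dc$ plus a first-order transport term $c\,Dv\cdot D|Dv|$ (which vanishes at a gradient maximum), and no $c^2$ appears. The actual source of the $c^2$ coercivity is the quadratic curvature reaction $b^{ij}b^{kl}(u-\theta)_{ki}(u-\theta)_{lj} = \tr\bigl((AB)^2\bigr)$ with $A=(b^{ij})$, $B=(D^2(u-\theta))$, which the paper bounds below by $\tfrac{1}{2}(\tr AB)^2$ via the matrix Cauchy--Schwarz inequality in $\R^2$, and then identifies $\tr AB = b^{ij}(u-\theta)_{ij} = u_t - c\,w$ through the PDE itself, where $w=\sqrt{\ep^2+|D(u-\theta)|^2}$. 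Expanding $(u_t - cw)^2$ produces $c^2w^2$; this is also why the prior bound $|u_t|\leq M$ is an essential \emph{input} to the gradient estimate (it controls the cross term $-2u_t c w$). Your schematic identity puts the $c^2$ term in the wrong place and misses this mechanism, so the interior inequality $|Dc|\,w^2 \geq \tfrac{1}{2}(u_t - cw)^2$ is never reached.

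At the boundary the paper does not use a fixed additive corrector. For the specific boundary maximum point $x_0$ it picks the inscribed ball $B(x_c,K_0/2)$ tangent to $\partial W$ at $x_0$, sets $\rho(x)=-\tfrac{C_0}{K_0}|x-x_c|^2+\tfrac{C_0K_0}{4}+1$, and works with the \emph{product} $\psi=\rho w$. The identity $C_0\rho(x_0)+\partial\rho/\partial\mathbf{n}(x_0)=0$ cancels the curvature contribution $\partial w/\partial\mathbf{n}\leq C_0 w$ and forces $\max \rho w$ into the interior; $D^2\rho=-\tfrac{2C_0}{K_0}I$ and $|D\rho|\leq C_0$ are exactly what generate the constants $8C_0/K_0$ and $2C_0|c|$ in \eqref{condition:c}. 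Your proposed fixed $h$ with $\partial h/\partial\mathbf{n}\geq 1$ on all of $\partial W$ together with $|D^2h|\leq C/K_0$ is problematic: a function with unit normal derivative on a boundary whose curvature is of order $C_0$ has tangential Hessian of order $C_0$, which can far exceed $1/K_0$, and the additive choice $\phi-Mh$ does not reproduce the stated constants. (Minor route differences: for $u_t$ the paper simply compares $u^\ep(\cdot,t+s)$ against $u^\ep(\cdot,t)$ using the barriers $u_0\pm Mt$ and time-translation invariance, avoiding differentiation in $t$ altogether; and the regularization is $\sqrt{\ep^2+|D(u-\theta)|^2}$ in place of $|D(u-\theta)|$, not an added $\ep\Delta v$ -- the former preserves the divergence structure needed for the Bernstein identity.)
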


\begin{remark}
In case that $c(x)=c>0$ is a constant force term, then \eqref{condition:c} is equivalent to the requirement that
\[
c > C_0  \left(1 + \left(1+\frac{8}{C_0 K_0} \right)^{1/2} \right).
\]
Note that $C_0 \geq 1/r$, which is quite large for $r>0$ small.
Besides, $K_0$ can be quite small if some of the balls $\{B(a_j,r)\}$ are close to each other or close to $\partial \Omega$.
In such situations, we would need to require $c$ to be very large in order to have uniform Lipschitz estimate for $u$, which implies that \eqref{condition:c} is rather restrictive.
\end{remark}

\begin{example}
We consider the case that $\Omega = B(0,R)$, $W= B(0,R) \setminus B(0,r)$ for given $R\geq 2r>0$ and $c(x)=c>0$ is a constant force term.
Then, $C_0=1/r$, $K_0=R-r \geq r$, and \eqref{condition:c} is equivalent to the requirement that
\[
c > 4 C_0 = \frac{4}{r}.
\]
\end{example}

Next, we study the large time average of the auxiliary function $u$.
\begin{theorem}\label{thm:large average}
Let $u$ be the unique viscosity solution to \eqref{eq:C}.
For each $t\geq 0$, denote by
\[
S(t)=\max_{x\in \ol W} u(x,t).
\]
Then, there exists a constant $S=S_c \in \R$ such that
\[
\lim_{t \to \infty} \frac{S(t)}{t} = S_c.
\]
If we assume furthermore \eqref{condition:c}, then
\begin{equation}\label{eq:large average}
\lim_{t \to \infty} \frac{u(x,t)}{t}=S_c \qquad \text{ uniformly for } x\in \ol W.
\end{equation}
\end{theorem}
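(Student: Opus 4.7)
The plan is to obtain the limit $S_c$ for $S(t)/t$ via a classical subadditivity argument based on the translation invariance $u \mapsto u + \mathrm{const}$ of \eqref{eq:C} together with the Neumann comparison principle from \cite{Oh}; then, under \eqref{condition:c}, the uniform-in-$x$ convergence will follow immediately from the spatial Lipschitz estimate of Theorem \ref{thm:Lip-u}.

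For the first assertion, set $\psi(t) := S(t) - \min_{\ol W} u_0$. For any $s \geq 0$, the function $v(x,t) := u(x, t+s)$ solves \eqref{eq:C} with initial data $u(\cdot, s) \leq S(s)$, while $w(x,t) := u(x,t) + S(s) - \min_{\ol W} u_0$ solves \eqref{eq:C} with initial data bounded below by $S(s)$; comparison thus yields
\[
u(x, t+s) \leq u(x,t) + S(s) - \min_{\ol W} u_0 \qquad \text{for all } (x,t)\in \ol W\times [0,\infty),
\]
whence $S(t+s) \leq S(t) + S(s) - \min_{\ol W} u_0$, i.e.\ $\psi(t+s) \leq \psi(t) + \psi(s)$. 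Fekete's lemma then gives $\psi(t)/t \to \inf_{t>0} \psi(t)/t \in [-\infty, +\infty)$. To ensure the limit lies in $\R$, I would construct linear-in-time envelopes $w^\pm(x,t) := u_0(x) \pm Ct$ for $C$ sufficiently large: the compatibility $D(u_0-\theta)\cdot\mathbf n=0$ is preserved, the right-hand side of \eqref{eq:C} evaluated at $u_0$ is bounded on the regular set $\{D(u_0-\theta)\ne 0\}$, and the viscosity envelopes of the geometric nonlinearity at $p=0$ have the right sign. Hence $w^-$ (resp.\ $w^+$) is a viscosity sub- (resp.\ super-) solution, comparison gives $|u(x,t)-u_0(x)| \leq Ct$, and consequently $\psi(t)/t$ is bounded, so $S_c := \lim \psi(t)/t$ exists in $\R$.

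For the second assertion, Theorem \ref{thm:Lip-u} provides the uniform spatial Lipschitz bound $\|Du\|_{L^\infty(W\times[0,\infty))} \leq L$; hence for every $(x,t) \in \ol W \times (0,\infty)$,
\[
S(t) - L\,\operatorname{diam}(\ol W) \leq u(x,t) \leq S(t).
\]
Dividing by $t$ and using the first assertion, both bounds tend to $S_c$ as $t\to\infty$, yielding $u(x,t)/t \to S_c$ uniformly in $x \in \ol W$, which is \eqref{eq:large average}.

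The main technical obstacle is the viscosity-theoretic verification that $w^\pm = u_0 \pm Ct$ are genuine sub/super-solutions at the degenerate points $\{D(u_0-\theta)=0\}$, where the right-hand side of \eqref{eq:C} is only defined through the upper/lower semicontinuous envelopes of the geometric nonlinearity at $p=0$; this is a standard but nontrivial check in the theory of level-set mean curvature flows, and also requires that the Neumann boundary test be compatible with those envelopes. Once that point is settled, the remainder of the argument is just Fekete's lemma together with the one-line oscillation bound above.
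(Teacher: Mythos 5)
Your proof is correct and takes essentially the same route as the paper: subadditivity of a shifted $S(t)$ via the comparison principle and translation invariance, Fekete's lemma, linear-in-time envelopes $u_0 \pm Ct$ for finiteness of the limit, and the global Lipschitz bound from Theorem~\ref{thm:Lip-u} for the uniform-in-$x$ convergence. Regarding the technical obstacle you flag at $\{D(u_0-\theta)=0\}$, the paper bypasses the semicontinuous-envelope check by first verifying $u_0 \pm Mt$ (with $M$ as in \eqref{eq:M}) are super/subsolutions of the $\varepsilon$-regularized flow \eqref{eq:C-ep}, whose operator is smooth and nondegenerate at $p=0$, and then inheriting the bound for $u$ by stability as $\varepsilon\to 0$; your direct check with viscosity envelopes is an equally valid, if slightly more delicate, route.
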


We now use the formulation described in \cite{OTG1} to deduce back the asymptotic growth rate of the crystal from the auxiliary function $u$.
Let $k(x,t) \in \Z$ be such that
\begin{equation}\label{eq:height1}
-\pi \leq u(x,t) - \left(\Theta(x) + 2\pi k(x,t) \right) <\pi,
\end{equation}
where $\Theta(x)=\sum_{j=1}^N m_j \Theta_j(x)$, and $\Theta_j: \ol W \to [0,2\pi)$ is the principal value of $\arg(x-a_j)$.
Denote by $h_0>0$ the unit height of steps.
We define the height of the crystal at $(x,t) \in \ol W \times [0,\infty)$ by
\begin{equation}\label{eq:height2}
h(x,t) = \frac{h_0}{2\pi} \left[ \Theta(x) + 2\pi k(x,t) + \pi\, {\rm sign} \left(u(x,t) - \left(\Theta(x) + 2\pi k(x,t) \right)\right) \right].
\end{equation}

\begin{theorem}\label{thm:crystal average}
Let $u$ be the unique viscosity solution to \eqref{eq:C}.
Let $S_c$ be defined as in Theorem \ref{thm:large average}.
Let $h:\ol W \times [0,\infty) \to \R$ be the height of the crystal defined by \eqref{eq:height1}--\eqref{eq:height2}.
Then,
\begin{equation}\label{eq:height3}
\lim_{t\to \infty} \frac{\max_{x\in \ol W} h(x,t)}{t} = \frac{h_0}{2\pi} S_c.
\end{equation}
If we assume furthermore \eqref{condition:c}, then
\begin{equation}\label{eq:height4}
\lim_{t \to \infty} \frac{h(x,t)}{t}=\frac{h_0}{2\pi} S_c \qquad \text{ uniformly for } x\in \ol W.
\end{equation}
\end{theorem}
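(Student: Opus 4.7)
The plan is to reduce Theorem \ref{thm:crystal average} to Theorem \ref{thm:large average} by establishing the uniform bound $|h(x,t) - \tfrac{h_0}{2\pi} u(x,t)| \leq h_0$ on $\ol W \times [0,\infty)$. Setting $\eta(x,t) := u(x,t) - \Theta(x) - 2\pi k(x,t)$, the defining relation \eqref{eq:height1} forces $\eta(x,t) \in [-\pi, \pi)$. Substituting $\Theta(x) + 2\pi k(x,t) = u(x,t) - \eta(x,t)$ into \eqref{eq:height2} gives
\begin{equation*}
h(x,t) = \frac{h_0}{2\pi} \left[ u(x,t) - \eta(x,t) + \pi\, {\rm sign}(\eta(x,t)) \right],
\end{equation*}
and since $\bigl|-\eta + \pi\,{\rm sign}(\eta)\bigr| \leq \pi$ for every $\eta \in [-\pi,\pi)$, the claimed uniform bound follows (in fact with $h_0/2$ in place of $h_0$, but any $O(1)$ bound is enough for what follows).

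To prove \eqref{eq:height3}, I would take the maximum over $x \in \ol W$ on both sides of the bound; using $|\max_x f - \max_x g| \leq \sup_x|f-g|$, this yields $\bigl|\max_{x\in\ol W} h(x,t) - \tfrac{h_0}{2\pi} S(t)\bigr| \leq h_0$. Dividing by $t$ and invoking Theorem \ref{thm:large average} for $S(t)/t \to S_c$ gives \eqref{eq:height3}. Under the additional assumption \eqref{condition:c}, the same inequality applied pointwise produces
\begin{equation*}
\left| \frac{h(x,t)}{t} - \frac{h_0}{2\pi} \cdot \frac{u(x,t)}{t} \right| \leq \frac{h_0}{t},
\end{equation*}
and combined with the uniform convergence \eqref{eq:large average} from Theorem \ref{thm:large average}, this delivers \eqref{eq:height4}.

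I do not anticipate a serious obstacle: the theorem is essentially a bookkeeping corollary of Theorem \ref{thm:large average}. The only point requiring some care is to observe that the $\pi\,{\rm sign}(\eta)$ correction term in \eqref{eq:height2}, together with the half-open interval $[-\pi,\pi)$ in \eqref{eq:height1}, keeps $h$ within a \emph{uniformly} bounded distance of $\tfrac{h_0}{2\pi} u$ on all of $\ol W \times [0,\infty)$; it is precisely this uniformity in $(x,t)$ that allows the error to be annihilated after division by $t$, so that the convergence rate for $h/t$ is inherited intact from that for $u/t$.
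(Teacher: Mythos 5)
Your proof is correct and takes essentially the same approach as the paper: establish a uniform $O(1)$ bound between $h(x,t)$ and $\tfrac{h_0}{2\pi}u(x,t)$, then divide by $t$ and invoke Theorem \ref{thm:large average}. The paper routes the comparison through $k(x,t)$ (bounding $u-2\pi k$ and $h-h_0 k$ separately via $\|\Theta\|_{L^\infty}$), whereas you eliminate $k$ algebraically and obtain the sharper constant $h_0/2$ directly, but the idea is identical.
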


In general, without assuming  \eqref{condition:c}, we do not have uniform control on the asymptotic growth rate of the crystal.
Nevertheless, having the asymptotic growth rate of the tip (the highest point) of the crystal in \eqref{eq:height3} might be enough for practical purposes.

\medskip

\begin{definition}
Let $S_c$ be as in Theorem \ref{thm:large average}.
We say that $S_c$ is the asymptotic growth rate of $u$ solving \eqref{eq:C}.
We also say that $h_0 S_c/(2\pi)$ is the asymptotic growth rate of the corresponding crystal whose height is defined by \eqref{eq:height1}--\eqref{eq:height2}.
\end{definition}

We next study the asymptotic growth rate $S_c$ of $u$, or equivalently, $h_0 S_c/(2\pi)$ of $h$.
Various interesting numerical results on the asymptotic growth rate were obtained in \cite{OTG1, OTG2}.

\begin{lemma}\label{lem:Sc-1}
Assume that $N=1$,  $m_1=1$, $a_1=0$, $W=B(0,R)\setminus B(0,r)$ for given $0<r<R$, and $c \in C^1(\ol W, (0,\infty))$.
Then,
\[
 \min_{x \in \ol W} \frac{c(x)}{|x|} \leq S_c \leq  \max_{x \in \ol W} \frac{c(x)}{|x|}.
\]
\end{lemma}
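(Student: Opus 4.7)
\textbf{Proof plan for Lemma~\ref{lem:Sc-1}.}
The plan is to sandwich $u$ between two explicit classical sub- and super-solutions of \eqref{eq:C} of the form $v(x,t)=\beta t+C$ (constant in space, linear in $t$), and then let $t\to\infty$ using Theorem~\ref{thm:large average}. Geometrically, such $v$ corresponds to ``level sets'' $\{v-\theta\equiv\text{const.}\}$ that are straight rays from the origin rotating rigidly at angular speed $\beta$; these are the simplest ansatz compatible with the rotational symmetry of the annulus $W=B(0,R)\setminus B(0,r)$ and the choice $N=1$, $m_1=1$, $a_1=0$.

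The core observation is the following. Since $Dv\equiv 0$, we have $D(v-\theta)=-D\theta=-x^\perp/|x|^2$, which is nowhere zero on $\ol W$, so the singular-set issue does not arise. Using $|D(v-\theta)|=1/|x|$ and the direct computation
\[
\Div\!\left(\frac{D(v-\theta)}{|D(v-\theta)|}\right)=\Div\!\left(-\frac{x^\perp}{|x|}\right)=0\quad\text{in }\ol W
\]
(the level sets are rays, hence have zero curvature), the PDE in \eqref{eq:C} reduces to the pointwise identity
\[
v_t-|D(v-\theta)|\!\left(\Div\!\left(\tfrac{D(v-\theta)}{|D(v-\theta)|}\right)+c(x)\right)=\beta-\frac{c(x)}{|x|}.
\]
Moreover, because $D\theta$ is tangent to the circles $|x|=r$ and $|x|=R$, the Neumann condition $D(v-\theta)\cdot\mathbf{n}=0$ holds with equality on $\partial W$.

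Setting $\alpha^+:=\max_{\ol W}c(x)/|x|$ and $\alpha^-:=\min_{\ol W}c(x)/|x|$, and
\[
v^+(x,t):=\alpha^+t+\|u_0\|_{L^\infty(\ol W)},\qquad v^-(x,t):=\alpha^-t-\|u_0\|_{L^\infty(\ol W)},
\]
the above shows that $v^+$ is a classical super-solution and $v^-$ a classical sub-solution of \eqref{eq:C}, with $v^-(\cdot,0)\leq u_0\leq v^+(\cdot,0)$ on $\ol W$. The comparison principle for \eqref{eq:C} (from the well-posedness in \cite{Oh}) yields
\[
\alpha^- t-\|u_0\|_{L^\infty(\ol W)}\leq u(x,t)\leq \alpha^+ t+\|u_0\|_{L^\infty(\ol W)}\quad\text{on }\ol W\times[0,\infty).
\]
Taking the maximum over $x\in\ol W$, dividing by $t$, and letting $t\to\infty$, Theorem~\ref{thm:large average} gives $\alpha^-\leq S_c\leq \alpha^+$, which is the claim.

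There is no serious obstacle here: everything reduces to the radial divergence identity $\Div(x^\perp/|x|)=0$ and an application of comparison. The only point worth double-checking is that our spatially constant $v^\pm$ are genuine viscosity sub/super-solutions in the full initial--boundary value problem of \eqref{eq:C}; since $D(v^\pm-\theta)\neq 0$ on $\ol W$ we remain in the regular regime of the singular operator, and the Neumann condition holds as a pointwise identity, so this reduces to the classical verification above.
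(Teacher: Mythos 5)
Your proposal is correct and follows essentially the same route as the paper: both construct spatially constant barriers of the form $\gamma t + \text{const.}$, exploit the identity $\Div(x^\perp/|x|)=0$ to evaluate the curvature term, verify the Neumann condition holds as a pointwise identity, and conclude via the comparison principle together with Theorem~\ref{thm:large average}. Your write-up is slightly more explicit about the non-vanishing of $D(v-\theta)$ and the boundary condition, but the substance is identical.
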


In Propositions \ref{lem:Sc-2}--\ref{lem:Sc-4} in Section \ref{sec:large}, we give various situations when $N=1,2$ in which $S_c=0$.

As the asymptotic growth rate of $u$ is $S_c$, it is natural to consider the following ergodic problem
\begin{equation}\label{eq:E}
\begin{cases}
-|D(v-\theta)| \left(\Div\left(\frac{D(v-\theta)}{|D(v-\theta)|}\right)+c(x)\right) = -S_c \qquad &\text{ in } W,\\
D(v-\theta)\cdot \mathbf{n}=0  \qquad &\text{ on } \partial W.
\end{cases}
\end{equation}

We have the following large time behavior result for \eqref{eq:C} in the case that $S_c=0$.


\begin{theorem}\label{thm:C-E}
Let $u$ be the unique viscosity solution to \eqref{eq:C}.
Assume \eqref{condition:c} and $S_c=0$.
Then, there exists a viscosity solution $v$ to \eqref{eq:E} with $S_c=0$ such that
\[
\lim_{t\to \infty} \|u(\cdot,t)-v\|_{L^\infty(W)} =0.
\]
\end{theorem}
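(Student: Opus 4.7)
The plan is to combine the Lipschitz estimate from Theorem \ref{thm:Lip-u} with the half-relaxed limit technique and an asymptotic monotonicity argument, in the spirit of the Barles--Souganidis approach to large-time behavior for Hamilton--Jacobi equations.

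First, Theorem \ref{thm:Lip-u} gives a uniform Lipschitz bound on $u$ in both $x$ and $t$, so in particular $\mathrm{osc}_{\ol W}\, u(\cdot, t) \leq L\,\mathrm{diam}(W)$. Picking $T_n \to \infty$ and normalizing, the shifts $u(\cdot, T_n) - \max_{\ol W} u(\cdot, T_n)$ are equi-Lipschitz and uniformly bounded, hence converge uniformly (along a subsequence) to some Lipschitz function $v$ on $\ol W$. Combining stability of viscosity solutions with the sublinear growth $u(\cdot, t) = o(t)$ supplied by Theorem \ref{thm:large average}, any such limit $v$ is a viscosity solution of the stationary problem \eqref{eq:E} with $S_c = 0$. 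Since $S_c=0$, this $v$ is also a stationary solution of the parabolic problem \eqref{eq:C}, and the comparison principle from \cite{Oh, GNOh} yields
\[
\|u(\cdot, t) - v\|_{L^\infty(W)} \leq \|u_0 - v\|_{L^\infty(W)}, \qquad t \geq 0,
\]
so $u$ is globally bounded on $\ol W \times [0,\infty)$.

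Next, the Lipschitz bound in $x$ ensures that the half-relaxed limits
\[
\ol u(x) := \limsup_{s \to \infty} u(x, s), \qquad \underline u(x) := \liminf_{s \to \infty} u(x, s)
\]
are finite Lipschitz functions on $\ol W$ with $\underline u \leq \ol u$, and Barles--Perthame stability gives that $\ol u$ (resp.\ $\underline u$) is a viscosity sub-(resp.\ super-)solution of \eqref{eq:E} with $S_c=0$. Applying comparison to $u$ and the constant shifts $v + C$ of the stationary solution $v$, the quantities
\[
M(t) := \max_{\ol W}\bigl(u(\cdot, t) - v\bigr), \qquad m(t) := \min_{\ol W}\bigl(u(\cdot, t) - v\bigr)
\]
are respectively non-increasing and non-decreasing in $t$, so they converge to constants $M_\infty \geq m_\infty$. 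Passing to the limit and using the connectedness of $\ol W$ together with a rigidity argument for stationary solutions of \eqref{eq:E}, one obtains $\ol u \equiv v + M_\infty$ and $\underline u \equiv v + m_\infty$.

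The final step is to prove $M_\infty = m_\infty$, which forces $\ol u = \underline u$ and hence the uniform convergence $u(\cdot, t) \to v + M_\infty$. I would argue by contradiction: a strict gap $M_\infty > m_\infty$ would produce two distinct Lipschitz stationary solutions $v + M_\infty$ and $v + m_\infty$ of \eqref{eq:E} that are simultaneously selected by the same parabolic orbit, and combining this with a strong-comparison/rigidity argument for the stationary operator on the connected domain $\ol W$ yields a contradiction. The main obstacle is precisely this last step: for the singular, degenerate mean curvature operator on the multiply-connected domain $W$ with Neumann boundary and the multi-valued spiral background $\theta$, a strong maximum principle is not off-the-shelf. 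I expect to resolve it by combining the parabolic comparison of \cite{GNOh} with careful local analysis near $\partial W$ and near the singular set $\{D(v - \theta) = 0\}$, or alternatively by a Lyapunov/entropy method exploiting the specific variational structure of the ergodic problem \eqref{eq:E}.
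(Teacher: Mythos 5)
Your outline pursues a Barles--Souganidis style route (equi-Lipschitz snapshots, half-relaxed limits, monotonicity of $M(t)$ and $m(t)$, then a rigidity/strong-comparison step to force $M_\infty = m_\infty$), whereas the paper's proof is by a Lyapunov function following \cite{GTZ,JKMT}. These are genuinely different strategies, and the difference matters here: the step you flag as the ``main obstacle'' is in fact a real gap, not a technicality to be patched. For the singular, degenerate operator in \eqref{eq:C}--\eqref{eq:E}, there is no strong maximum/comparison principle available that would let you conclude $\ol u \equiv v+M_\infty$, $\underline u \equiv v+m_\infty$, or $M_\infty = m_\infty$; the degeneracy at $\{D(u-\theta)=0\}$ and the failure of strict propagation are precisely why one cannot argue this way. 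Proposition \ref{prop:C-E} is a warning sign in the same direction: even after removing the linear growth, the solution can rotate forever instead of converging, so any argument that tries to squeeze convergence out of a generic comparison/rigidity lemma must fail. The Lyapunov route that the paper invokes exploits the gradient-flow structure --- roughly, $\frac{d}{dt}\bigl(\int_W |D(u-\theta)|\,dx - \int_W c\,(u-\theta)\,dx\bigr) = -\int_W u_t^2/|D(u-\theta)|\,dx \le 0$ at the level of the $\epsilon$-regularization \eqref{eq:C-ep} --- which gives $\int_0^\infty\int_W u_t^2 < \infty$ and hence compactness of the orbit with $u_t\to 0$, without any strong maximum principle. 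This is the idea you mention only in your last sentence as a fallback; it is in fact the whole proof.

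There is also a secondary gap earlier in your argument: the claim that any subsequential limit $v$ of $u(\cdot,T_n)-\max_{\ol W}u(\cdot,T_n)$ is a viscosity solution of the \emph{stationary} problem \eqref{eq:E} does not follow from equi-Lipschitzness and the sublinear growth $u(x,t)=o(t)$. Sublinear growth controls the average of $u_t$ but not $u_t$ itself, so a priori the limit of the time-shifted solutions $u(\cdot, T_n+\cdot)-\max u(\cdot,T_n)$ is a solution of the \emph{parabolic} equation with no reason to be $t$-independent; eliminating the $t$-dependence is again exactly what the Lyapunov dissipation provides. Since everything downstream of step~3 in your proposal assumes $v$ is a stationary solution, this gap propagates. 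In short, the half-relaxed-limit framework is the right scaffolding, but both the extraction of a stationary limit and the uniqueness of the limit require the dissipation identity, not a strong comparison principle.
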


The proof of this theorem follows that of  \cite[Theorem 1.2]{GTZ} or \cite[Theorem 1.3]{JKMT} which uses a Lyapunov function, and is hence omitted.
It is worth emphasizing that the condition $S_c=0$ in Theorem \ref{thm:C-E} is essential for the argument; and if $S_c \neq 0$, then the large time behavior for \eqref{eq:C} is rather open.
Next, we give a result along the line of Lemma \ref{lem:Sc-1}.
For a given angle $\varphi$, denote by $\cR_\varphi$ a linear transformation given by rotating vectors through an angle of $\varphi$ counterclockwise in $\R^2$.
Then, the matrix of $\cR_\varphi$ is given by
\[
\cR_\varphi=
\begin{pmatrix}
\cos\varphi & -\sin\varphi \\
\sin\varphi & \cos\varphi
\end{pmatrix}.
\]

\begin{proposition}\label{prop:C-E}
Assume that $N=1$,  $m_1=1$, $a_1=0$, $W=B(0,R)\setminus B(0,r)$ for given $0<r<R$, and $c(x)=c_0|x|$ for $x\in \ol W$ for some $c_0>0$.
Assume further that $u_0(x)=g(x/|x|)$ for all $x\in \ol W$ for a given $g \in C^2(\R^2)$ with $\|Dg\|_{L^\infty} <1$.
Then, we have
\[
u(x,t) = g\left(\cR_{-c_0t} \frac{x}{|x|}\right) + c_0t \qquad \text{ for all } (x,t)\in \ol W \times [0,\infty).
\]
In particular, $u(x,t) - c_0t$ does not converge as $t\to\infty$ if $g$ is not constant.
\end{proposition}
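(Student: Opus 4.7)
The strategy is to verify the stated formula as a classical solution of \eqref{eq:C} and then invoke the uniqueness of viscosity solutions. Write $x\in W$ in polar coordinates, $x=\rho(\cos\al,\sin\al)$, and set $\hat r:=(\cos\al,\sin\al)$, $\hat\al:=(-\sin\al,\cos\al)$, and $\phi(\al):=g(\cos\al,\sin\al)$. The matrix of $\cR_{-c_0 t}$ yields $\cR_{-c_0t}(\cos\al,\sin\al)=(\cos(\al-c_0t),\sin(\al-c_0t))$, so the proposed solution is
$$v(x,t):=\phi(\al-c_0t)+c_0t,$$
where $\phi\in C^2(\R)$ is $2\pi$-periodic and $\|\phi'\|_{L^\infty(\R)}\le\|Dg\|_{L^\infty(\R^2)}<1$. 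In particular $v$ is well defined and $C^2$ on $\ol W\times[0,\infty)$ since $0\notin\ol W$ and $\phi$ is periodic.

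Because $v$ is independent of $\rho$, the standard polar gradient formula gives $Dv=\rho^{-1}\phi'(\al-c_0t)\,\hat\al$, while $\theta=\al$ gives $D\theta=\rho^{-1}\hat\al$; hence
$$D(v-\theta)=\frac{\phi'(\al-c_0t)-1}{\rho}\,\hat\al.$$
The hypothesis $\|\phi'\|_\infty<1$ ensures that $|D(v-\theta)|=\rho^{-1}(1-\phi'(\al-c_0t))$ is smooth and bounded below by a positive constant on $\ol W\times[0,\infty)$, and that $D(v-\theta)/|D(v-\theta)|=-\hat\al$. A direct computation in Cartesian coordinates shows $\div(-\hat\al)=0$ on $\R^2\setminus\{0\}$. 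Combining these with $c(x)=c_0\rho$, the right-hand side of \eqref{eq:C} becomes
$$|D(v-\theta)|\bigl(\div(-\hat\al)+c_0\rho\bigr)=c_0-c_0\phi'(\al-c_0t),$$
which is exactly $v_t$. Since $\pl W$ consists of two circles centered at $0$, on which $\mathbf{n}=\pm\hat r\perp\hat\al$, the Neumann condition $D(v-\theta)\cdot\mathbf{n}=0$ holds automatically, and the initial condition $v(x,0)=\phi(\al)=g(x/|x|)=u_0(x)$ is satisfied.

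Thus $v$ is a classical, and hence a viscosity, solution of \eqref{eq:C}; by the uniqueness of viscosity solutions established in \cite{Oh}, $u=v$, which is the claimed formula. For the final assertion, if $g$ is not constant then $\phi$ is not constant, so for every fixed $x\in\ol W$ the function $t\mapsto u(x,t)-c_0t=\phi(\al(x)-c_0t)$ is a non-constant $2\pi/c_0$-periodic function of $t$ and therefore has no limit as $t\to\infty$. There is no substantial obstacle in this argument: the hypothesis $\|Dg\|_{L^\infty}<1$ is precisely what rules out degeneracy of the mean curvature operator along the explicit solution, and the only nontrivial calculation is the cancellation $\div(\hat\al)=0$.
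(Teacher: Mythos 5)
Your proof is correct and takes essentially the same route as the paper: verify the claimed formula is a classical solution of \eqref{eq:C} by direct computation (including the key cancellation $\div(x^\perp/|x|)=0$), then invoke uniqueness of viscosity solutions. The only difference is cosmetic — you carry out the computation in polar coordinates $v(x,t)=\phi(\alpha-c_0t)+c_0t$ while the paper works in Cartesian form with the rotation matrix; your version is cleaner, and you are slightly more thorough in that you check the Neumann condition explicitly and make the final non-convergence claim precise via the periodicity of $t\mapsto\phi(\alpha-c_0t)$.
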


Thus, in general, it is striking that solutions to \eqref{eq:C} do not converge to solutions of the ergodic problem \eqref{eq:E} after appropriate normalizations as time tends to infinity.

\subsection{Relevant literature}
We give a non exhaustive list of related works to our paper.
We refer the reader to \cite{CGG, ES, G} for the wellposedness of viscosity solutions of level-set mean curvature flows.
There have been many important papers using the level-set forced mean curvature flow PDE  to study spiral crystal growths \cite{Oh, GNOh, OTG1, OTG2, Oh2, Oh3, Sm}.
For Neumann boundary problems similar to this context, see \cite{H,GOS, Oh1,MT, MWW, JKMT, Jang}.
Besides the spiral crystal growth model, the birth and spread crystal growth model has also been studied extensively \cite{GMT2, GMT, GMOT, HM, G18}.
We emphasize that it is rather important to study the asymptotic growth rates and the shapes of the crystals in these models as time tends to infinity.
For related results concerning large time averages and large time behaviors, see \cite{GTZ, GLXY, MMTXY, FIM}.

\subsection*{Organization of the paper}
The paper is organized as follows. 
In Section \ref{sec:Lip}, we obtain the Lipschitz regularity of the solution of \eqref{eq:C} and give the proof of Theorem \ref{thm:Lip-u} under the additional assumption \eqref{condition:c}.
We then give the proof of Theorems \ref{thm:large average}--\ref{thm:crystal average} and study the properties of the asymptotic growth rate and the large time behavior of the solution in Section \ref{sec:large}.
In Section \ref{sec:ex}, we give an example of a non-uniformly Lipschitz continuous solution of \eqref{eq:C}, which shows that \eqref{condition:c} is needed in general if we want to have uniform estimates.

\subsection*{Notations}
For $1\leq j\leq N$, denote by $\theta_j(x)= \arg(x-a_j)$ and $\Theta_j$ the principle value of $\theta_j$.
For a given smooth function $\phi$, we write $\phi_k = \phi_{x_k}$, $\phi_{kl} = \phi_{x_k x_l}$.
When there is no confusion, we use the Einstein summation convention.
For $x=(x_1,x_2)\in \R^2$, we write $x^\perp=(-x_2,x_1)$.
For a given angle $\varphi$, denote by $\cR_\varphi$ a linear transformation given by rotating vectors through an angle of $\varphi$ counterclockwise in $\R^2$.
Note that $x^\perp=\cR_{\pi/2}\, x$ for $x\in \R^2$.

\subsection*{Acknowledgement}
The authors would like to extend their sincere appreciation to Professor Yoshikazu Giga for his insights and comments.
Additionally, the authors are grateful to Professor Takeshi Ohtsuka for sending them the references \cite{Oh2, Oh3} and his suggestions.



\section{Lipschitz regularity} \label{sec:Lip}

To obtain Lipschitz estimates, we consider the following approximation, for $\varepsilon \in (0,1)$, $T>0$,
\begin{equation}\label{eq:C-ep}
\begin{cases}
u^{\varepsilon}_t=\sqrt{\varepsilon^2+|D (u^{\varepsilon}-\theta)|^2}\left(\Div\left(\frac{D (u^{\varepsilon}-\theta)}{\sqrt{\varepsilon^2+|D (u^{\varepsilon}-\theta)|^2}}\right)+c(x) \right) \quad &\text{ in } W \times(0,T],\\
D(u^\ep-\theta)\cdot \mathbf{n}=0 \quad &\text{ on } \partial W \times[0,T],\\
u^{\varepsilon}(x,0)=u_0(x) \quad &\text{ on } \overline{W}.
\end{cases}
\end{equation}

The following result on a priori estimates on the gradient of $u^\ep$ implies right away Theorem \ref{thm:Lip-u}.

\begin{theorem}[A priori estimates]\label{thm:local-grad-ep}
Assume that $\partial W$ is smooth and $c\in C^\infty(\ol W)$.
Assume \eqref{condition:c}.
For each $\ep \in (0,1)$  and  $T>0$, assume that $u^\ep \in C^\infty(\ol W \times (0,T]) \cap C^1(\overline{W}\times[0,T])$ is the unique solution of \eqref{eq:C-ep}.
Then, there exists a constant $L>0$ depending only on $\|u_0\|_{C^2(\ol W)}$, $\|D\theta\|_{C^1(\ol W)}$, $\|c\|_{C^1(\ol W)}$,  the constants $C_0$, $K_0$, and $\delta$ from \eqref{condition:c} such that
\begin{equation}\label{eq:lip-u-ep}
\|u^\ep_t\|_{L^\infty(W\times [0,\infty))} + \|Du^\ep\|_{L^\infty(W\times [0,\infty))} \leq L.
\end{equation}

\end{theorem}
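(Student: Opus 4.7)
The plan is Bernstein's method, treating the time derivative and the spatial gradient separately. Since \eqref{eq:C-ep} has no explicit $t$-dependence and the Neumann condition $D(u^\ep-\theta)\cdot\mathbf{n}=0$ is time-independent, differentiating \eqref{eq:C-ep} in $t$ shows that $w:=u^\ep_t$ solves a linear parabolic equation on $W\times(0,T]$ with homogeneous Neumann boundary condition $Dw\cdot\mathbf{n}=0$ on $\partial W$. The parabolic maximum principle then yields
\[
\|u^\ep_t\|_{L^\infty(W\times[0,T])}\le \|u^\ep_t(\cdot,0)\|_{L^\infty(W)},
\]
and the right-hand side is bounded in terms of $\|u_0\|_{C^2(\ol W)}$, $\|D\theta\|_{C^1(\ol W)}$, and $\|c\|_{L^\infty(\ol W)}$ uniformly in $\ep\in(0,1)$ by evaluating \eqref{eq:C-ep} at $t=0$ with $u^\ep(\cdot,0)=u_0$.

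For the gradient bound, I would set $v:=u^\ep-\theta$ (well defined locally since $D\theta$ is single-valued and smooth on $\ol W$, and $|Du^\ep|$ and $|Dv|$ differ pointwise by at most $\|D\theta\|_{L^\infty(\ol W)}$) and introduce an auxiliary function
\[
\Phi(x,t):=|Dv(x,t)|^2+\Psi(x),
\]
where $\Psi$ is a $C^2$ geometric correction built from a smooth extension of the signed distance to $\partial W$, calibrated using $C_0$ and $K_0$. The evolution PDE for $\Phi$ follows from differentiating \eqref{eq:C-ep} in $x$ and contracting with $Dv$, keeping track of the linearized operator $a^{ij}(Dv)=\delta^{ij}-(Dv)_i(Dv)_j/(\ep^2+|Dv|^2)$. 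I would then apply the parabolic maximum principle on $\ol W\times[0,T]$.

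At an interior maximum of $\Phi$, the vanishing of $D\Phi$ forces $(D^2v)Dv$ to equal $-\tfrac{1}{2}D\Psi$, so $Dv$ is nearly in the kernel of $D^2v$; this is the key algebraic observation that kills the unwieldy terms arising from $a^{ij}_{p_\ell}$. In suitable orthonormal coordinates, $D^2v$ reduces (up to lower-order corrections) to a rank-one matrix whose single nonzero eigenvalue can be read off from \eqref{eq:C-ep} in terms of $u^\ep_t$ and $c(x)$. Substituting this and using the $L^\infty$ bound on $u^\ep_t$ from the first step, one should arrive at an inequality of the schematic form
\[
0\le -\bigl(c(x)^2-2|Dc(x)|-2C_0|c(x)|-8C_0/K_0\bigr)|Dv|^2+O(|Dv|),
\]
so \eqref{condition:c} forces $|Dv|$ to be bounded.

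The main obstacle is the boundary maximum analysis, since $\partial W$ is non-convex: the inner circles $\partial B(a_j,r)$ contribute principal curvature $-1/r$ as seen from $W$. Differentiating the Neumann condition tangentially yields $\partial_\mathbf{n}|Dv|^2=-2\langle S(Dv),Dv\rangle\le 2C_0|Dv|^2$, where $S$ is the shape operator of $\partial W$ satisfying $S\succeq -C_0 I$; this alone does not contradict $\partial_\mathbf{n}\Phi\ge 0$. The correction $\Psi$ must therefore be crafted so that its normal derivative compensates this boundary defect and feeds in precisely the $-2C_0|c|$ and $-8C_0/K_0$ contributions needed to match \eqref{condition:c}; $K_0$ enters here through the scale of an inscribed ball within which $\Psi$ can be taken $C^2$. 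Carrying out this interior-boundary balance, following the strategy of \cite{JKMT}, is the delicate step, and the precise quantitative form of \eqref{condition:c} is exactly what is required to close the estimate.
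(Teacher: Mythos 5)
Your treatment of the time derivative (differentiating \eqref{eq:C-ep} in $t$, observing that $w=u^\ep_t$ satisfies a linear uniformly parabolic equation with homogeneous Neumann condition, and invoking the maximum principle) is a valid alternative to what the paper does; the paper instead uses the comparison principle with constant-in-space barriers $u_0\pm Mt$ and the semigroup property to conclude $\|u^\ep_t\|_{L^\infty}\le M$, which is somewhat slicker since it avoids discussing the linearized operator and its degeneracy, but both routes give the same bound.

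The gradient bound, however, has a genuine gap at the boundary, precisely at the step you flag as delicate. You propose an \emph{additive} correction $\Phi=|Dv|^2+\Psi(x)$ and hope to craft $\Psi$ so that $\partial_\mathbf{n}\Psi$ compensates the defect $\partial_\mathbf{n}|Dv|^2\le 2C_0|Dv|^2$ coming from the non-convex boundary components $\partial B(a_j,r)$. This cannot work: $\Psi$ is a fixed function of $x$, so $\partial_\mathbf{n}\Psi$ is bounded uniformly, while the defect $2C_0|Dv|^2$ grows quadratically in the very quantity you are trying to bound. No additive correction dominates a term proportional to $|Dv|^2$ when $|Dv|$ is large, which is exactly the regime that must be ruled out. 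The paper circumvents this by using a \emph{multiplicative} correction: one sets $w=\sqrt{\ep^2+|D(u^\ep-\theta)|^2}$ and $\psi=\rho w$, where $\rho(x)=-\frac{C_0}{K_0}|x-x_c|^2+\frac{C_0K_0}{4}+1$ for $x_c$ the center of an inscribed ball $B(x_c,K_0/2)$ tangent to $\partial W$ at the boundary maximum $x_0$. Then $\partial_\mathbf{n}\psi=\rho\,\partial_\mathbf{n}w+w\,\partial_\mathbf{n}\rho<w\bigl(C_0\rho+\partial_\mathbf{n}\rho\bigr)=0$ at $x_0$, because the boundary inequality $\partial_\mathbf{n}w< C_0 w$ scales \emph{linearly} in $w$ and $\rho$ is calibrated so that $C_0\rho(x_0)+\partial_\mathbf{n}\rho(x_0)=0$. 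This pushes the maximum of $\psi$ off $\partial W$ into the interior ball $B$, where the Bernstein/Cauchy--Schwarz argument applies (now with extra $\rho$-dependent terms that produce the $-2C_0|c|$ and $-8C_0/K_0$ contributions in \eqref{condition:c}). To repair your argument you would need to replace $\Phi=|Dv|^2+\Psi$ by a product $\rho\cdot|Dv|^2$ (or $\rho\cdot w$ as in the paper), so that the boundary defect and the correction have the same homogeneity in $|Dv|$; the interior computation then needs to be redone carrying the $D\rho$, $D^2\rho$ terms, which is where the geometric constants $C_0$, $K_0$ actually enter the final inequality.
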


The proof below follows the ideas in \cite{JKMT}, which uses the classical Bernstein method.
We also refer the reader to \cite{Jang, Oh3}.
\begin{proof}
Firstly, let
\begin{equation}\label{eq:M}
M = 4 \lVert D^2(u_0-\theta)\rVert_{L^\infty(\ol W)} + \lVert  (1+|D (u_0-\theta)|^2)^{1/2} c\rVert_{L^\infty(\ol W)}.
\end{equation}
We see that $u_0 \pm Mt$ are a supersolution and a subsolution to \eqref{eq:C-ep}, respectively.
By the comparison of principle, we yield that, for $s\geq 0$,
\begin{equation}\label{eq:bdd-t-1}
u_0 - Ms \leq u^\ep(\cdot,s) \leq u_0 + Ms.
\end{equation}
Moreover, for each $s\geq 0$, $(x,t)\mapsto u^\ep(x,t+s)$ is a solution to \eqref{eq:C-ep} with initial data $u^\ep(x,s)$.
By using the comparison principle once more and \eqref{eq:bdd-t-1}, we deduce that, for $t,s\geq 0$,
\[
\|u^\ep(\cdot,t+s) - u^\ep(\cdot,t)\|_{L^\infty(W)} \leq \|u^\ep(\cdot,s) - u^\ep(\cdot,0)\|_{L^\infty(W)} \leq Ms.
\]
Therefore, we obtain that
\begin{equation}\label{eq:lip-u-ep-t}
\|u^\ep_t\|_{L^\infty(W\times [0,\infty))} \leq M.
\end{equation}

\medskip

We next show the boundedness of $Du^\ep$.
Denote by
\[
w^{\varepsilon}=\sqrt{\varepsilon^2+|D(u^{\varepsilon}-\theta)|^2}.
\] 
We only need to show that
\begin{equation}\label{eq:bound-w-ep}
\max_{\ol W \times [0,T]} w^\ep \leq C
\end{equation}
for some positive constant $C$ depending only on $\|u_0\|_{C^2(\ol W)}$, $\|D\theta\|_{C^1(\ol W)}$, $\|c\|_{C^1(\ol W)}$,  the constants $C_0$, $K_0$, and $\delta$ from \eqref{condition:c}.
The crucial point here is $C$ does not depend on $T>0$ and $\ep \in (0,1)$.
Pick $(x_0,t_0)\in\argmax_{\overline{W}\times[0,T]}w^{\varepsilon}$. 
If $t_0=0$, then
\[
\max_{\ol W \times [0,T]} w^\ep \leq w^\ep(x_0,0) \leq \|D(u_0-\theta)\|_{L^\infty(\ol W)} + 1,
\]
and \eqref{eq:bound-w-ep} holds true.

\smallskip

We consider the case $t_0>0$.
Write $u=u^{\varepsilon}$, $w=w^{\varepsilon}$ from now on for clarity.
Denote by
\[
b^{ij}=\delta_{ij}  - \frac{(u-\theta)_i (u-\theta)_j}{\varepsilon^2+|D(u-\theta)|^2}.
\]
Differentiate \eqref{eq:C-ep} with respect to $x_k$ and multiply the result by $(u-\theta)_k$ to get
\begin{multline*}
(u-\theta)_k(u-\theta)_{kt}-(D_pb^{ij}\cdot D(u-\theta)_k)(u-\theta)_k(u-\theta)_{ij}-b^{ij}(u-\theta)_{k}(u-\theta)_{kij}\\
-(u-\theta)_k c_k w-c\frac{(u-\theta)_k(u-\theta)_{lk}(u-\theta)_l}{w}=0.
\end{multline*}
In the above, we use the fact that $\theta_{kt}=0$ as $\theta$ is independent of $t$.
Substituting $ww_t=(u-\theta)_k(u-\theta)_{kt}$, $ww_k=(u-\theta)_l(u-\theta)_{kl}$, and $ww_{ij}=(u-\theta)_{kij}(u-\theta)_k+b^{kl}(u-\theta)_{ki}(u-\theta)_{lj}$, we yield
\begin{multline}\label{eq:w}
ww_t-w(D_pb^{ij}\cdot Dw)(u-\theta)_{ij}-wb^{ij}w_{ij}+b^{ij}b^{kl}(u-\theta)_{ki}(u-\theta)_{lj}\\
-wD(u-\theta)\cdot Dc-cD(u-\theta)\cdot Dw=0. 
\end{multline}
There are two cases to be considered $x_0\in W$ and $x_0\in\partial W$.

\medskip

\noindent {\bf Case 1: $x_0\in W$.}
We follow the computations of \cite[Lemma 4.1]{GMOT}. 
At $(x_0,t_0)$, we have $w_t\geq0$, $Dw=0$, $D^2w\leq0$, and thus
\[
wD(u-\theta)\cdot Dc\geq b^{ij}b^{kl}(u-\theta)_{ki}(u-\theta)_{lj}.
\]
We then use the Cauchy-Schwarz inequality
\[
(\textrm{tr}\alpha\beta)^2\leq\textrm{tr}(\alpha^2)\textrm{tr}(\beta^2)
\]
for all $\alpha, \beta\in\mathcal{S}^2$, and put $\alpha=A^{\frac{1}{2}}BA^{\frac{1}{2}}$, $\beta=I_2$, 
where $A=(b^{ij})$, $B=((u-\theta)_{kl})$, $I_2$ the identity matrix of size $2$ to get $\textrm{tr}(AB)^2\geq(\textrm{tr}AB)^2/\textrm{tr}(I_2).$

Therefore, at $(x_0,t_0)$,
\begin{align*}
\left|Dc(x_0)\right|w^2 &\geq wD(u-\theta)\cdot Dc\geq b^{ij}b^{kl}(u-\theta)_{ki}(u-\theta)_{lj}=\textrm{tr}(AB)^2\\
&\geq\frac{(\textrm{tr}AB)^2}{\textrm{tr}(I_2)}=\frac{1}{2}\left(u_t-c(x_0)w\right)^2.
\end{align*}
Since $\frac{1}{2}c(x)^2-\left|Dc(x)\right|\geq\frac{\delta}{2}>0$ by \eqref{condition:c}, we imply that at $(x_0,t_0)$,
\[
\delta w^2 \leq 2 u_t c(x_0) w \quad \Longrightarrow \quad w \leq \frac{2 M \|c\|_{L^\infty(\ol W)}}{\delta},
\]
which gives \eqref{eq:bound-w-ep}.

\medskip

\noindent {\bf Case 2: $x_0\in\partial W$.}
As $\partial W$ is $C^{2}$, we assume that $\mathbf{n}$ is defined as a $C^1$ function in a neighborhood of $\partial W$.
Note that the Neumann boundary condition $D(u-\theta) \cdot {\mathbf{n}}=0$ gives $\left(D^2(u-\theta){\mathbf{n}}+D{\mathbf{n}} D(u-\theta)\right)\cdot  v =0$ for all $ v \in \R^2$ perpendicular to ${\mathbf{n}}$  on $\partial W \times [0,T]$.
Thus, on $\partial W \times [0,T]$,
\[
\frac{\partial w}{\partial \mathbf{n}}=\frac{D^2(u-\theta) D(u-\theta)}{w}\cdot {\mathbf{n}}=-\frac{D{\mathbf{n}}D(u-\theta)\cdot D(u-\theta)}{w}\leq C_0\frac{|D(u-\theta)|^2}{w}.
\]
We note that $C_0 \geq 1/r$ and at $(x_0,t_0)$,
\[
\frac{\partial w}{\partial {\mathbf{n}}}\leq C_0\frac{|D(u-\theta)|^2}{w}<C_0w.
\]

\medskip

Pick $x_c\in W$ such that $B:=B(x_c, K_0/2) \subset W$ is tangent to $\partial W$ at $x_0$. 
Consider a multiplier for $w$
\[
\rho(x)=-\frac{C_0}{K_0}|x-x_c|^2+\frac{C_0 K_0}{4}+1 \quad \text{ for } x \in \ol W.
\]
Then, $\rho>1$ in $B$, $\rho=1$ on $\partial B$, and $\rho <1$ on $\ol W \setminus \ol B$.
Besides, $C_0 \rho(x_0)+\frac{\partial \rho}{\partial {\mathbf{n}}}(x_0)=0$. 

Denote by $\psi=\rho w$.
Then, at $(x_0,t_0)$,
\begin{equation}\label{rho-1}
\frac{\partial \psi}{\partial {\mathbf{n}}}=\frac{\partial (\rho w)}{\partial {\mathbf{n}}}=\rho\frac{\partial w}{\partial {\mathbf{n}}}+w\frac{\partial \rho}{\partial {\mathbf{n}}}<w\left(C_0\rho+\frac{\partial \rho}{\partial {\mathbf{n}}}\right) = 0.
\end{equation}
As noted above, for $(z,t) \in \left( \ol W \setminus \ol B \right)\times [0, T]$,
\begin{equation*}\label{rho-2}
 \psi(z,t) \leq w(z,t)\leq w(x_0,t_0) =  \psi(x_0,t_0),
\end{equation*}
and, by \eqref{rho-1},
\begin{equation}\label{rho-3}
\max_{\overline{W}\times[0,T]}\rho w = \max_{\overline{B}\times[0,T]}\rho w > \psi(x_0,t_0)=w(x_0,t_0).
\end{equation}
Let $(x_1,t_1)\in\argmax_{\overline{W}\times[0,T]}\rho w$.
Thanks to \eqref{rho-1}--\eqref{rho-3},  $x_1\in B \subset W$. 
 If $t_1=0$, then for all $(x,t)\in\overline{W}\times[0, T]$,
\begin{align*}
w(x,t)&\leq w(x_0,t_0)=\rho(x_0)w(x_0,t_0)\leq\rho(x_1)w(x_1,0) \\
&\leq  \left(\frac{C_0 K_0}{4}+1 \right)\left(\|D(u_0-\theta)\|_{L^\infty(\ol W)} + 1\right),
\end{align*}
which gives the desired result. 
We now consider the case that $t_1>0$. 
At this point $(x_1,t_1)$, we have $\psi_t\geq0,$ $D\psi=0,$ $D^2\psi\leq0$. 
Consequently, as $\psi_t=\rho w_t$, $D\psi=wD\rho+\rho Dw$, and $\psi_{ij}=w_{ij}\rho+w_i\rho_j+w_j\rho_i+w\rho_{ij}$, we have at $(x_1,t_1)$,
\[
w_t\geq-\frac{\rho_t}{\rho}w=0,\quad
 Dw=-\frac{w}{\rho}D\rho,\quad
w_{ij}=\frac{1}{\rho}(\psi_{ij}-w_i\rho_j-w_j\rho_i-w\rho_{ij}).
\]
We use \eqref{eq:w} and the above to yield that, at $(x_1,t_1)$,
\begin{multline*}
\frac{w^2}{\rho}(D_{p}b^{ij}\cdot D\rho)(u-\theta)_{ij}+\frac{w}{\rho}b^{ij}(w_i\rho_j+w_j\rho_i+w\rho_{ij})\\
+b^{ij}b^{kl}(u-\theta)_{ki}(u-\theta)_{lj}-wD(u-\theta)\cdot Dc+\frac{cw}{\rho}D(u-\theta)\cdot D\rho\leq0.
\end{multline*}
By direct computations
\[
b^{ij}_{p_l}=-\frac{\delta_{il}(u-\theta)_j}{\varepsilon^2+|D(u-\theta)|^2}-\frac{\delta_{jl}(u-\theta)_i}{\varepsilon^2+|D(u-\theta)|^2}+\frac{2(u-\theta)_i(u-\theta)_j(u-\theta)_l}{(\varepsilon^2+|D(u-\theta)|^2)^2},
\]
and hence,
\begin{align*}
&w(D_pb^{ij}\cdot D\rho)(u-\theta)_{ij}\\
=\, &w\left(-\frac{\rho_i (u-\theta)_j(u-\theta)_{ij}}{\varepsilon^2+|D(u-\theta)|^2}-\frac{\rho_j(u-\theta)_i(u-\theta)_{ij}}{\varepsilon^2+|D(u-\theta)|^2}+\frac{2(u-\theta)_i(u-\theta)_j(u-\theta)_l\rho_l (u-\theta)_{ij}}{(\varepsilon^2+|D(u-\theta)|^2)^2}\right)\\
=\, &-2Dw\cdot D\rho+\frac{2(D(u-\theta)\cdot D\rho)(D(u-\theta)\cdot Dw)}{w^2}.
\end{align*}
Therefore,
\begin{align*}
&w(D_pb^{ij}\cdot D\rho)(u-\theta)_{ij}+b^{ij}w_i\rho_j+b^{ij}w_j\rho_i\\
=\ &\frac{2(D(u-\theta)\cdot D\rho)(D(u-\theta)\cdot Dw)}{w^2}-\frac{(u-\theta)_i(u-\theta)_jw_i\rho_j}{w^2}-\frac{(u-\theta)_i(u-\theta)_jw_j\rho_i}{w^2}\\
=\ &0.
\end{align*}
Thus, at $(x_1,t_1)$, 
\begin{equation}\label{rho-4}
\frac{\rho_{ij}}{\rho}b^{ij}w^2+b^{ij}b^{kl}(u-\theta)_{ki}(u-\theta)_{lj}-wD(u-\theta)\cdot Dc+\frac{cw}{\rho}D(u-\theta)\cdot D\rho\leq 0.
\end{equation}
Using the Cauchy-Schwarz type inequality as in the above, we deduce that
\begin{align*}
\frac{1}{2}(u_t-c(x_1)w)^2&\leq b^{ij}b^{kl}(u-\theta)_{ki}(u-\theta)_{lj}\\
&\leq-\frac{w^2}{\rho}b^{ij}\rho_{ij}+wD(u-\theta)\cdot Dc-\frac{cw}{\rho}D(u-\theta)\cdot D\rho\\
&\leq \frac{2C_0}{K_0}\frac{w^2}{\rho} \left(2-\frac{|D(u-\theta)|^2}{\varepsilon^2+|D(u-\theta)|^2}\right)+|Dc|w^2+C_0|c|w^2\\
&\leq\left(\frac{4C_0}{K_0}+|Dc(x_1)|+C_0|c(x_1)|\right)w^2.
\end{align*}
Combine the above with \eqref{condition:c} that
\[
\frac{1}{2}c(x)^2-|Dc(x)|-C_0 |c(x)|-\frac{4 C_0}{K_0}\geq\frac{\delta}{2}>0\quad\quad\text{for all}\ x\in\overline{W},
\]
we see that $w(x_1,t_1) \leq \frac{2 M \|c\|_{L^\infty(\ol W)}}{\delta}$. 
Thus,
\[
w(x_0,t_0) \leq \rho(x_1) w(x_1,t_1) \leq \left(\frac{C_0 K_0}{4}+1\right) \frac{2 M \|c\|_{L^\infty(\ol W)}}{\delta}.
\]

\end{proof}


\section{Large time averages and behaviors} \label{sec:large}
\subsection{Large time average results}
We first prove Theorem \ref{thm:large average}.
Our approach is similar to that in \cite{GMOT}.

\begin{proof}[Proof of  Theorem \ref{thm:large average}]
We note that $u \in C(\ol W \times [0,\infty))$ and in light of \eqref{eq:bdd-t-1},
\[
u_0(x) - Mt \leq u(x,t) \leq u_0(x) +Mt \qquad \text{ for all } (x,t) \in \ol W \times [0,\infty),
\]
where $M$ is the constant defined in \eqref{eq:M}.
Thus, $S \in C([0,\infty))$, and $|S(t)| \leq \|u_0\|_{L^\infty(\ol W)} + Mt$ for $t\geq 0$.

Fix $s \geq 0$. 
By the definition of $S$, $u(x,s) \leq u_0(x) +\|u_0\|_{L^\infty(\ol W)} + S(s)$ for $x\in \ol W$.
Then, by the comparison principle for \eqref{eq:C}, we imply that, for $x\in \ol W$ and $t\geq 0$,
\[
u(x,s+t) \leq u(x,t) +\|u_0\|_{L^\infty(\ol W)} + S(s).
\]
In particular,
\[
S(s+t) \leq S(t) + S(s) +\|u_0\|_{L^\infty(\ol W)}.
\]
Thus, $t\mapsto  S(t) + \|u_0\|_{L^\infty(\ol W)}$ is subadditive.
By the Fekete lemma, there exists $S=S_c \in \R$ such that
\[
\lim_{t\to\infty} \frac{ S(t) + \|u_0\|_{L^\infty(\ol W)}}{t} = \lim_{t\to\infty} \frac{ S(t) }{t} = S.
\]
In fact,
\[
S = \inf_{t>0} \frac{S(t) + \|u_0\|_{L^\infty(\ol W)}}{t},
\]
which yields that $|S| \leq M$.

Let us assume further that \eqref{condition:c} holds.
Then, by Theorem \ref{thm:Lip-u}, $\|Du\|_{L^\infty(\ol W \times [0,\infty))} \leq L$.
In particular, for $(x,t) \in \ol W \times [0,\infty)$,
\[
|u(x,t) - S(t)| \leq C.
\]
Hence, uniformly for $x\in \ol W$,
\[
\lim_{t\to\infty} \frac{u(x,t)}{t} = \lim_{t\to\infty} \frac{ S(t) }{t} = S,
\]
which gives \eqref{eq:large average}.
\end{proof}

\begin{proof}[Proof of  Theorem \ref{thm:crystal average}]
Notice first that $\|\Theta\|_{L^\infty(\ol W)} \leq 2\pi \sum_{j=1}^N |m_j|$.
Thus, for $(x,t)\in \ol W \times (0,\infty)$,
\[
-\frac{C}{t} \leq \frac{u(x,t)}{t} - \frac{2 \pi k(x,t)}{t} \leq \frac{C}{t},
\]
and
\[
-\frac{C}{t} \leq \frac{h(x,t)}{t} - \frac{h_0 k(x,t)}{t} \leq \frac{C}{t},
\]
where $C= (2\pi + h_0) \left(\sum_{j=1}^N |m_j|+1\right)$.
\end{proof}

\subsection{Properties of the asymptotic growth rate}
We next provide further analysis on the properties of the asymptotic growth rate $S_c$ of $u$.

\begin{proof}[Proof of Lemma \ref{lem:Sc-1}]
It is enough to obtain the lower bound of $S_c$ as the proof for the upper bound of $S_c$ follows analogously. 
Set $\gam= \min_{x\in \ol W} c(x)/|x|>0$.
We claim that 
\[
\phi(x,t) = -\|u_0\|_{L^\infty(\ol W)} + \gam t
\]
 is a viscosity subsolution to \eqref{eq:C}, which will automatically give us that $S_c \geq \gamma$ by the comparison principle.

Let us now prove this claim.
It is obvious that $\phi(\cdot,0) \leq u_0$.
Next, note that $D(\phi-\theta)(x,t) = -D\theta(x) = -{x^\perp}/{|x|^2}$, and hence,
\[
\Div\left(\frac{D(\phi-\theta)}{|D(\phi-\theta)|}\right)=-\Div\left(\frac{x^\perp}{|x|}\right)=0.
\]
Thus,
\begin{align*}
\phi_t -|D(\phi-\theta)| \left(\Div\left(\frac{D(\phi-\theta)}{|D(\phi-\theta)|}\right)+c(x)\right)
=\gam - \frac{c(x)}{|x|} \leq 0.
\end{align*}
The proof is complete.
\end{proof}

\begin{remark}
Let us note that in the situation of Lemma \ref{lem:Sc-1}, it is also possible to construct subsolutions and supersolutions to \eqref{eq:C} of the form
\[
\phi(x,t) = \varphi\left(\frac{x}{|x|}\right) + \gamma t,
\]
where $\varphi$ is a smooth function to be decided.
Nevertheless, the lower and upper bounds of $S_c$ are unchanged, regardless of the specific choice of subsolutions and supersolutions in this form.
\end{remark}

\begin{remark}
In the situation of Lemma \ref{lem:Sc-1}, if we assume further that $c(x)=c_0|x|$ for all $x\in \ol W$ for some given constant $c_0>0$, then $\phi(x,t)=\alpha + c_0 t$ is a viscosity solution to \eqref{eq:C}  with $\phi(x,0)=\alpha$ for any $\alpha \in \R$.
Moreover, in this particular case, we have that $S_c=c_0$, which is independent of $r$, the radius of the deleted hole.
\end{remark}

Next, we consider the situation where there are two spiral centers which rotate in different directions of the same strength.
This was first studied in \cite{BCF} in which the two spirals are called an opposite rotating pair.
If the two spiral centers are close to each other, that is, $|a_1-a_2| \leq 2/\max_{\ol W} c$, then they are called an inactive pair, in which case  $S_c=0$. (see Proposition \ref{lem:Sc-3}).
If the two spiral centers are far away from each other, \cite{BCF} gives some heuristic explanations that this opposite rotating pair actually accelerates the growth rate, which is faster than that of a single spiral.
There has not been a rigorous proof of this point in the literature yet.
We refer the reader to \cite{OTG1, OTG2} for numerical results.

We give several results along this line.
The first one is a toy model case (see Figure \ref{fig.two-holes-strip}).

\begin{proposition}\label{lem:Sc-2} 
Assume that $N=2$, $a_1=(l,0)$, $a_2=(-l,0)$ with $0<r<l$, and
\[
\big[B(a_1,r) \cup B(a_2,r) \cup ((-l,l)\times (-r,r))\big] \cap W=\emptyset.
\]
Assume furthermore that $m_1=-m_2$. 
Then, $S_c=0$.
\end{proposition}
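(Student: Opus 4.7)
The plan is to exploit the cancellation $m_1+m_2=0$ together with the topology of $W$ in order to sandwich $u$ between two \emph{time-independent} sub- and supersolutions, which forces $\|u(\cdot,t)\|_{L^\infty(\overline W)}$ to stay bounded in $t$ and hence $S_c=0$ directly from the definition of $S_c$.

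The first step is to check that $\theta$ admits a globally single-valued smooth determination on $\overline W$. Under the geometric hypothesis, the removed region $B(a_1,r)\cup B(a_2,r)\cup\bigl((-l,l)\times(-r,r)\bigr)$ is a single connected ``dumbbell'' containing both spiral centers, so $W$ is topologically an annulus and every closed loop in $W$ has the same winding number around $a_1$ as around $a_2$. Along any such loop, the increment of $\theta=m_1\arg(\cdot-a_1)+m_2\arg(\cdot-a_2)$ equals $2\pi(m_1+m_2)\cdot(\text{winding number})=0$. Hence $\theta$ has a single-valued smooth determination on $\overline W$ and, in particular, $\|\theta\|_{L^\infty(\overline W)}<\infty$.

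Thanks to Step~1, the change of variables $v:=u-\theta$ is globally well-defined and turns \eqref{eq:C} into the standard forced level-set mean curvature flow
\[
v_t=|Dv|\Bigl(\Div\bigl(Dv/|Dv|\bigr)+c(x)\Bigr)\text{ in }W\times(0,\infty),\qquad Dv\cdot\mathbf{n}=0\text{ on }\partial W,
\]
for which every constant is a stationary viscosity solution under the standard envelope convention at zero gradient. Equivalently, for each $C\in\R$, the function $\phi_C(x,t):=\theta(x)+C$ is a stationary viscosity solution of \eqref{eq:C} (the Neumann condition holds trivially because $D(\phi_C-\theta)\equiv 0$). Choosing constants $C_-\le C_+$ with $\theta+C_-\le u_0\le\theta+C_+$ on $\overline W$ (possible by compactness of $\overline W$ and continuity of $u_0$ and $\theta$), the comparison principle for \eqref{eq:C} from \cite{Oh} yields
\[
\theta(x)+C_-\le u(x,t)\le\theta(x)+C_+\quad\text{ for all }(x,t)\in\overline W\times[0,\infty).
\]
Consequently $\|u(\cdot,t)\|_{L^\infty(\overline W)}\le\|\theta\|_{L^\infty(\overline W)}+\max(|C_-|,|C_+|)$ uniformly in $t$, so $|S(t)|$ stays bounded and $S_c=\lim_{t\to\infty}S(t)/t=0$.

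The only delicate point is Step~1; this is precisely where both the geometric hypothesis on the strip $(-l,l)\times(-r,r)$ and the sign condition $m_1=-m_2$ must be used together to kill the monodromy of $\theta$. Once $\theta$ is globally single-valued, the rest is a routine comparison argument against a one-parameter family of stationary solutions of the form $\theta+C$, and the conclusion of the proposition follows immediately.
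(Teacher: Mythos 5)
Your proof is correct and follows essentially the same route as the paper: under the geometric hypothesis and $m_1=-m_2$, the function $\theta$ admits a globally single-valued bounded determination on $\overline W$, after which the family $\theta+C$ of stationary sub/supersolutions sandwiches $u$ uniformly in $t$, giving $S_c=0$. The only difference is presentational: the paper simply asserts that $\theta$ is well-defined, whereas you justify it via the winding-number cancellation enforced by the connected dumbbell (which is the right reason, and arguably worth recording), and you explicitly invoke the envelope convention at zero gradient for the stationary solution claim, which the paper leaves implicit.
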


 \begin{center}
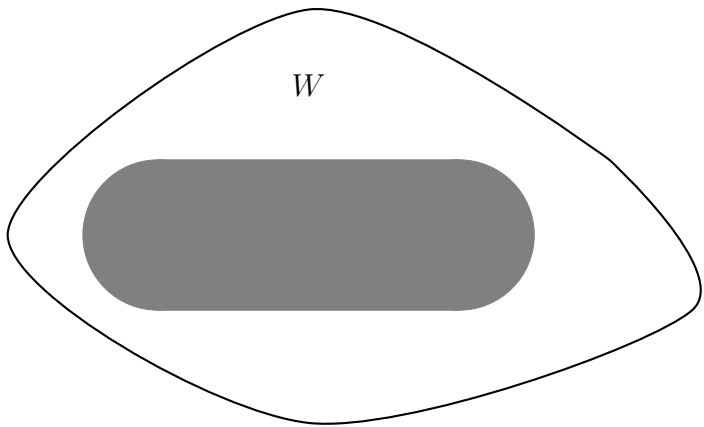
 
\begin{tikzpicture}
\draw[thick] plot [smooth] coordinates{(4,1) (0,3) (-4,0) (0,-2.5) (5.1,-1) (4,1)};
\draw[gray,  fill=gray](2,0) circle (1);
\draw[gray, fill=gray](-2,0) circle (1);
\draw[gray, fill=gray] (-2,1) rectangle (2,-1);
\fill (0,2) node {$W$};
\end{tikzpicture}
\captionof{figure}{An example of $W$ in Proposition \ref{lem:Sc-2}.}
\label{fig.two-holes-strip}
\end{center}

We now consider the situation where the two spiral centers are close to each other (see Figure \ref{fig.two-holes-only}).
\begin{proposition}\label{lem:Sc-3}
Assume that $N=2$, $a_1=(l,0)$, $a_2=(-l,0)$ with 
\[
0<r<l \leq R_0:=\frac{1}{\max_{\ol W} c},
\]
and $m_1=-m_2$. 
Pick $\theta \in (0,\pi/2]$ such that $R_0 = (l-r \cos \theta)/\sin\theta$.
Denote by $A=(-l,0)+ r(\cos \theta, \sin \theta)$, 
$B=(l,0)+r(-\cos \theta, \sin \theta)$,
$C=(-l,0)+r(\cos \theta, - \sin \theta)$,
$D=(l,0)+r(-\cos \theta, - \sin \theta)$.
Let $\overset{\frown}{AB}$ be an arc of a circle of radius $R_0$ that is perpendicular to both $\partial B(a_1,r)$ and $\partial B(a_2,r)$.
Let $\overset{\frown}{CD}$ be an arc of a circle of radius $R_0$ that is perpendicular to both $\partial B(a_1,r)$ and $\partial B(a_2,r)$.
Assume furthermore that $\overset{\frown}{AB}, \overset{\frown}{CD} \subset W$.
Then, $S_c=0$.
\end{proposition}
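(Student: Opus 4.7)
The plan is to prove $S_c = 0$ by producing time-independent viscosity sub- and supersolutions of the stationary problem $|D(v-\theta)|\bigl(\Div(D(v-\theta)/|D(v-\theta)|) + c(x)\bigr) = 0$ in $W$ with the Neumann condition $D(v-\theta)\cdot\mathbf{n}=0$ on $\partial W$. By the comparison principle applied to \eqref{eq:C}, a bounded stationary supersolution $\phi_+$ yields $u(\cdot,t)\le \phi_+ + \|u_0-\phi_+\|_{L^\infty}$, hence $S_c \le 0$; a bounded stationary subsolution $\phi_-$ symmetrically gives $S_c \ge 0$. Thus the whole proof reduces to producing such a sub/supersolution pair adapted to the geometry. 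Note that unlike the one-spiral case of Lemma \ref{lem:Sc-1}, a constant function no longer suffices: the level sets of $\theta = \arg(x-a_1) - \arg(x-a_2)$ are circular arcs through $a_1$ and $a_2$ whose curvature can be as large as $1/l \ge 1/R_0 = \max c$, so one really must exploit the barrier arcs.

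The key geometric observation is that $\overset{\frown}{AB}$ and $\overset{\frown}{CD}$ have curvature exactly $1/R_0 = \max_{\ol W} c \ge c(x)$, and, by the perpendicularity at $A,B,C,D$, they concatenate with the arcs of $\partial B(a_1,r)$ from $B$ to $D$ and of $\partial B(a_2,r)$ from $A$ to $C$ into a closed $C^1$ Jordan curve $\Gamma \subset \overline{W}$. Orient $\Gamma$ as the boundary of the enclosed region $U \subset W$: the signed curvature $\kappa$ of the two arcs satisfies $\kappa + c(x) \le 0$ (so they are stationary supersolution fronts), while the two arcs of $\partial B(a_j,r)$ lie on $\partial W$, so they satisfy the Neumann boundary condition automatically. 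Also, because $m_1 = -m_2$, the multivalued function $\theta$ has trivial total monodromy around the pair $\{a_1,a_2\}$, so a single-valued branch of $\theta$ is available on $W$ after removing any arc joining $a_1$ and $a_2$; in particular, one may hide the branch cut inside $U$.

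Given this, I would construct $\phi_+$ as a signed-distance-type modification $\phi_+(x) = \theta(x) + \psi_+(x)$, where $\psi_+$ is (a mollification of) the signed distance to $\Gamma$, positive inside $U$ and negative outside, chosen so that $\{\phi_+ - \theta = 0\} = \Gamma$ and $\phi_+-\theta$ is a single-valued $C^{1,1}$ function away from the skeleton of $\Gamma$. The supersolution inequality $|D(\phi_+-\theta)|(\kappa + c) \le 0$ holds on the smooth pieces of $\Gamma$ by construction (curvature of the sublevel boundaries at $\Gamma$ is $\le -c$), extends to a neighborhood by continuity of $\psi_+$ in $C^2$, and is propagated to all of $W$ by taking $\phi_+$ to be the minimum of such local supersolutions with a large constant (the minimum of supersolutions remains a supersolution). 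A parallel construction, flipping the orientation of $\Gamma$ (so that its curvature satisfies $\kappa + c \ge 0$), produces the subsolution $\phi_-$. The Neumann condition on $\partial W$ is then verified piecewise: on the arcs of $\partial B(a_j,r)$ that lie in $\Gamma$ it holds by design, and on the rest of $\partial W$ one chooses $\psi_\pm$ to be normal-flat there, e.g.\ by convolving with a kernel adapted to the boundary or by a standard reflection argument.

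The main obstacle, and where most of the technical work lies, is the rigorous verification of the viscosity sub/supersolution inequalities at the four corner points $A,B,C,D$ (where the pieces of $\Gamma$ meet $C^1$ but in general not $C^2$) and on the skeleton of $\Gamma$ where the signed distance is not smooth, together with ensuring that $\phi_\pm - \theta$ can be taken single-valued. These are handled by smoothing $\psi_\pm$ near the corners without losing the curvature estimate (the inequality $1/R_0 \ge c(x)$ is strict off a small set, giving a buffer for the smoothing), and by standard viscosity-solution doubling/min-inf-convolution arguments of the type used in \cite{Oh, GNOh}. Once $\phi_+$ and $\phi_-$ are in hand, the comparison principle delivers the two inequalities $S_c \le 0$ and $S_c \ge 0$, completing the proof.
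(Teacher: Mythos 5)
Your high-level plan (stationary sub/supersolutions plus comparison) matches the paper's, but the barrier construction is genuinely different, and as written it has a gap. The paper does not mollify a signed distance: it uses the \emph{discontinuous} step function $w = v - \theta$ equal to $2\pi$ on the region $W_1$ enclosed by $\overset{\frown}{AB}$, $\overset{\frown}{CD}$, $\partial B(a_1,r)$, $\partial B(a_2,r)$ and equal to $0$ on $W_2 = W\setminus \overline{W_1}$, then verifies the viscosity supersolution property for the lower semicontinuous envelope $w_*$ by invoking \cite[Lemma A.1]{GMT2} on the open arcs and by a short one-variable argument (parametrize the arc, use $\phi(\xi(\cdot))\le 0$) at the corner points $A,B,C,D$. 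This choice is what makes the proof clean: $Dw \equiv 0$ on all smooth pieces, so the Neumann condition $Dw\cdot\mathbf{n}=0$ holds trivially on $\partial W$ away from the four corners, and no mollification or corner-smoothing is needed at all. The subsolution side, which you propose symmetrically, is not written out in the paper; it is the easy half (a constant $w$ is a subsolution since $Dw=0$ and $F_*(0,0)=0$), but your instinct to state both bounds explicitly is reasonable.

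The concrete problem with your construction is the Neumann condition on the pieces of $\Gamma$ that lie on $\partial W$. On the arcs of $\partial B(a_1,r)$ from $B$ to $D$ and of $\partial B(a_2,r)$ from $A$ to $C$, the gradient of the signed distance to $\Gamma$ is perpendicular to $\Gamma$, hence \emph{radial}, i.e.\ parallel to $\mathbf{n}$, so $D\psi_+\cdot\mathbf{n} = \pm 1 \neq 0$ rather than $0$ ``by design'' as you assert; this is the opposite of what you need, and mollification does not repair it because the gradient direction near those arcs is unavoidably normal to $\partial W$. There is also a sign problem away from $\Gamma$: the signed distance to $\Gamma$ has level sets with radius $R_0 - d$ on the $U$-side (curvature increases, supersolution inequality only strengthens) but radius $R_0 + d$ on the $W_2$-side (curvature $1/(R_0+d) < 1/R_0$, so $-1/(R_0+d)+c(x)$ can be positive), so the unmodified signed distance fails to be a supersolution on the $W_2$-side near $\Gamma$; capping by a constant does not fix this because the problem is close to $\Gamma$ where the signed distance is still active. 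Both issues are sidestepped by the paper's discontinuous barrier, which is worth internalizing: in the viscosity framework, letting the barrier jump across the front avoids both the Neumann obstruction and the parallel-curve curvature degradation.
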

When $c(\cdot)$ is constant, Proposition \ref{lem:Sc-3} was already announced in \cite{Oh2}.

 \begin{center}
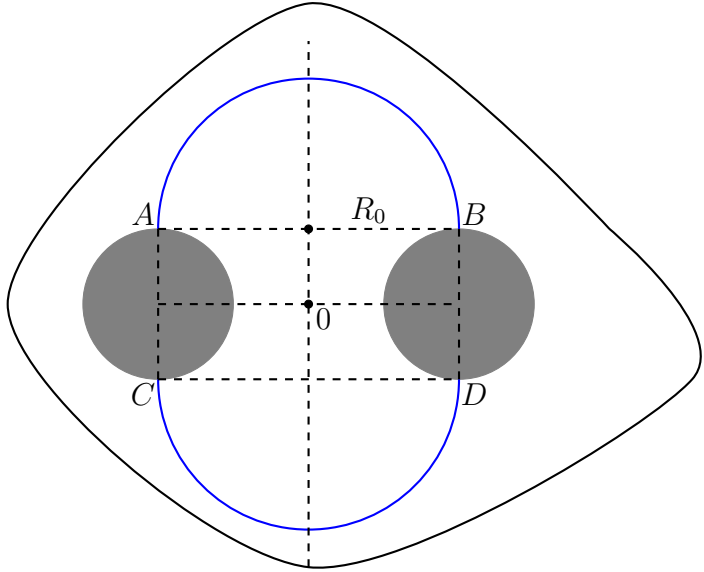
 
\begin{tikzpicture}
\draw[thick] plot [smooth] coordinates{(4,1) (0,4) (-4,0) (0,-3.5) (5.1,-1) (4,1)};
\draw[gray,  fill=gray](2,0) circle (1);
\draw[gray, fill=gray](-2,0) circle (1);
\draw[thick, blue] (2,1) arc(0:180:2);
\draw[thick, blue] (-2,-1) arc(180:360:2);
\fill (-2.2,1.2) node {$A$} (2.2,1.2) node {$B$} (-2.2,-1.2) node {$C$} (2.2,-1.2) node {$D$} (0.2,-0.2) node {$0$} (0.8,1.25) node {$R_0$};
\draw[thick, dashed] (-2,0)--(2,0);
\draw[thick, dashed] (-2,-1)--(-2,1);
\draw[thick, dashed] (2,-1)--(2,1);
\draw[thick, dashed] (-2,1)--(2,1);
\draw[thick, dashed] (-2,-1)--(2,-1);
\draw[thick, dashed] (0,-3.5)--(0,3.5);
\draw [fill] (0,0) circle [radius=1.5pt];
\draw [fill] (0,1) circle [radius=1.5pt];
\end{tikzpicture}
\captionof{figure}{An example of $W$ in Proposition \ref{lem:Sc-3} with $\theta=\pi/2$.}
\label{fig.two-holes-only}
\end{center}

By using a similar argument as that in the proof of Proposition \ref{lem:Sc-3}, we obtain the following rather surprising result with $N=1$ (see Figure \ref{fig.one-hole-only}).
\begin{proposition}\label{lem:Sc-4}
Assume that $N=1$, $m_1=1$.
Let $R_0=1/{\max_{\ol W} c}$.
Assume that there exist two arcs $\overset{\frown}{AB}, \overset{\frown}{CD}$ of two circles of radii $R_0$ with $A,B \in \partial B(a_1,r)$,  $\overset{\frown}{AB}, \overset{\frown}{CD}\subset W$ such that they are both perpendicular to $\partial B(a_1,r)$ and $\partial \Omega$.
Assume furthermore that $\overset{\frown}{AB}, \overset{\frown}{CD}$, $\partial B(a_1,r)$, and $\partial \Omega$ separate $W$ into two connected components $W_1, W_2$ such that $\overset{\frown}{AB}, \overset{\frown}{CD}$ are convex on $\partial W_1$, and $\arg(x-a_1)$ is well defined in each of $W_1$ and $W_2$.
Then, $S_c=0$.
\end{proposition}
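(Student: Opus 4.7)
The plan is to establish $S_c=0$ by two one-sided comparisons with time-independent sub- and supersolutions of \eqref{eq:C}.

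For the lower bound $S_c\geq 0$ I would use the observation from the proof of Lemma~\ref{lem:Sc-1} that for $\theta(x)=\arg(x-a_1)$ one has $\Div(D\theta/|D\theta|)=0$ in $W$. Consequently any constant $v(x,t)\equiv\alpha$ is a classical subsolution of \eqref{eq:C}:
\[
v_t-|D(v-\theta)|\Bigl(\Div\bigl(D(v-\theta)/|D(v-\theta)|\bigr)+c(x)\Bigr)=-c(x)|D\theta(x)|\leq 0.
\]
Taking $\alpha\leq\min_{\ol W}u_0$ and applying the comparison principle yields $u\geq\alpha$ on $\ol W\times[0,\infty)$, whence $S_c\geq 0$.

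The bulk of the work will be the upper bound $S_c\leq 0$, which I intend to obtain by producing a bounded time-independent viscosity supersolution $v:\ol W\to\R$; comparison will then give $u(\cdot,t)\leq v+\|u_0-v\|_\infty$ for all $t$ and therefore $S_c\leq 0$. The construction parallels that of Proposition~\ref{lem:Sc-3}. Since the two arcs split $W$ into two simply connected pieces $W_1,W_2$, I would pick a single-valued smooth branch $\theta_i$ of $\arg(x-a_1)$ on each $W_i$, and extend the given arcs $\overset{\frown}{AB},\overset{\frown}{CD}$ to a smooth foliation of $W_i$ by circular arcs of the common radius $R_0=1/\max_{\ol W}c$---all convex on the $W_1$-side and meeting $\partial B(a_1,r)$ and $\partial\Omega$ perpendicularly---and choose a smooth potential $\phi_i:W_i\to\R$ with these arcs as level sets, oriented so that $D\phi_i$ points toward the common center of the osculating circle. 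With this orientation $\Div(D\phi_i/|D\phi_i|)\equiv -1/R_0$, so $v_i:=\theta_i+\phi_i$ satisfies the interior supersolution inequality
\[
-|D\phi_i|\bigl(\Div(D\phi_i/|D\phi_i|)+c(x)\bigr)=|D\phi_i|\bigl(\tfrac{1}{R_0}-c(x)\bigr)\geq 0,
\]
and $D(v_i-\theta)\cdot\mathbf{n}=D\phi_i\cdot\mathbf{n}=0$ holds on $\partial W\cap\partial W_i$ by the perpendicularity of the foliation. To glue the two pieces into a continuous $v$ on $\ol W$, I would choose the branches so that $\theta_1=\theta_2$ along $\overset{\frown}{AB}$---in which case the monodromy of $\arg(x-a_1)$ forces $\theta_2-\theta_1=2\pi$ along $\overset{\frown}{CD}$---and then rescale and shift $\phi_1,\phi_2$ (which preserves both the level-set structure and the divergence identity) so that $\phi_1=\phi_2$ on $\overset{\frown}{AB}$ and $\phi_1-\phi_2=2\pi$ on $\overset{\frown}{CD}$, producing a continuous $v$ with $v|_{W_i}=v_i$.

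The main obstacle I anticipate is verifying the viscosity supersolution inequality for $v$ at points $p$ on the interface arcs, where $v$ is only Lipschitz and $Dv$ jumps. Both one-sided limits of $D(v-\theta)|_{W_i}=D\phi_i$ are perpendicular to the interface arc and point toward the common center of the osculating circle on the $W_1$-side, so the kink is of a controlled type: any smooth test function $\psi$ for which $v-\psi$ has a local minimum at $p$ has $D\psi(p)$ trapped on the segment joining the two one-sided gradients, with its tangential second-order behavior pinned by the smooth restriction of $v$ along the arc. A case analysis at this gradient discontinuity---entirely parallel to the one in the proof of Proposition~\ref{lem:Sc-3} to which the statement appeals---then reduces the supersolution inequality at $p$ to the interior inequality on either side, completing the construction of $v$ and delivering $S_c\leq 0$.
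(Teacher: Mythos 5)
Your lower bound $S_c\geq 0$ is fine: since $\div\big(D\theta/|D\theta|\big)=0$ for $\theta=\arg(x-a_1)$ and $c>0$, constants below $\min u_0$ are subsolutions, so $u$ is bounded below and $S_c\geq 0$.

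The upper bound, however, rests on a construction that the hypotheses do not supply: you want to extend the two given arcs to a smooth foliation of \emph{each} of $W_1$ and $W_2$ by circular arcs of radius (at most) $R_0=1/\max_{\ol W}c$, all perpendicular to $\partial B(a_1,r)$ and $\partial\Omega$, and then take a potential $\phi_i$ with these arcs as level sets. The proposition only asserts the existence of the two boundary arcs $\overset{\frown}{AB}$, $\overset{\frown}{CD}$; nothing guarantees a foliation in the interior, and generically it cannot exist. In $W_1$, the two bounding arcs both curve away from $W_1$ (both have their centers of curvature on the $W_1$ side), so any continuous family of curves interpolating from $\overset{\frown}{AB}$ to $\overset{\frown}{CD}$ must pass through flat configurations, and flat means curvature $0<1/R_0=\max c$, which breaks the interior inequality $-|D\phi_i|\big(\div(D\phi_i/|D\phi_i|)+c\big)\geq 0$. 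So the continuous supersolution you are trying to glue together does not exist under the stated assumptions, and the argument has a genuine gap.

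The paper takes a much more economical route that avoids foliating anything. It chooses branches so that $w=v-\theta$ is the \emph{piecewise-constant} function equal to $2\pi$ on $W_1$ and $0$ on $W_2$, i.e., a step function with a jump down across the two arcs when moving from $W_1$ to $W_2$. On the interiors of $W_1$ and $W_2$, $Dw\equiv 0$ and the stationary supersolution inequality with right-hand side $1/R_0$ holds trivially. The only work is on the discontinuity set: at interior points of $\overset{\frown}{AB}$ and $\overset{\frown}{CD}$ the viscosity test for the lower semicontinuous envelope $w_*$ is verified via \cite[Lemma A.1]{GMT2} (using that the jump is downward across arcs of curvature exactly $1/R_0$, convex toward $W_1$), and at the corner points $A,B,C,D$ the Neumann condition is checked directly with a test function along the arc. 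This also reveals a small inconsistency in your plan: you say the interface case analysis "parallels that of Proposition~\ref{lem:Sc-3}," but the proof of Proposition~\ref{lem:Sc-3} in the paper is likewise by this discontinuous step-function supersolution, not by a foliation, so the analogy you invoke actually points you toward the piecewise-constant construction rather than the one you propose.
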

We note that Proposition \ref{lem:Sc-4} is related to Theorem \ref{thm:no-Lip}, in which we give an example of a non-uniformly Lipschitz continuous solution of \eqref{eq:C}.

\begin{remark}
It is not hard to construct other examples with $N \geq 2$ with general $m_1,\ldots, m_N\in \Z \setminus \{0\}$ that are similar to those in Propositions \ref{lem:Sc-3}--\ref{lem:Sc-4} with $S_c=0$.
We thus see that the asymptotic growth rate $S_c$ demonstrates notable sensitivity to the geometric characteristics of the boundary $\partial \Omega$.
\end{remark}

 \begin{center}
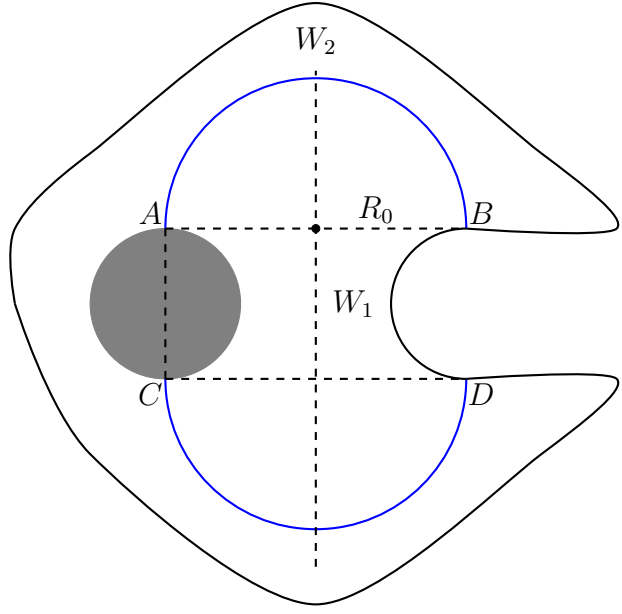
 
\begin{tikzpicture}
\draw[gray, fill=gray](-2,0) circle (1);
\draw[thick, blue] (2,1) arc(0:180:2);
\draw[thick, blue] (-2,-1) arc(180:360:2);
\fill (-2.2,1.2) node {$A$} (2.2,1.2) node {$B$} (-2.2,-1.2) node {$C$} (2.2,-1.2) node {$D$} (0.5,0) node {$W_1$} (0,3.5) node {$W_2$} (0.8,1.25) node {$R_0$};
\draw[thick] (2,1) arc(90:270:1);
\draw[thick] plot [smooth] coordinates{(-4,0) (-3,-2) (0,-4) (3,-2) (4,-1) (2,-1)};
\draw[thick] plot [smooth] coordinates{(-4,0) (-4,1) (-3,2) (0,4) (3,2) (4,1) (2,1)};
\draw[thick, dashed] (-2,-1)--(2,-1);
\draw[thick, dashed] (-2,-1)--(-2,1);
\draw[thick, dashed] (-2,1)--(2,1);
\draw[thick, dashed] (0,-3.5)--(0,3.1);
\draw [fill] (0,1) circle [radius=1.5pt];
\end{tikzpicture}
\captionof{figure}{An example of $W$ in Proposition \ref{lem:Sc-4}.}
\label{fig.one-hole-only}
\end{center}

\begin{proof}[Proof of Proposition \ref{lem:Sc-2}]
It is very interesting that in this case 
\[
\theta(x):= 
\sum_{j=1}^2 m_j\arg(x-a_j)
\]
is well-defined in $W$ as $m_1=-m_2$ and $a_1=(l,0), a_2 = (-l,0)$.
It is thus easy to see that 
\[
\varphi^\pm (x,t) =\pm (\|u_0\|_{L^\infty(\ol W)}+\|\theta\|_{L^\infty(\ol W)}) + \theta(x) 
\]
is a supersolution and a subsolution to \eqref{eq:C}, respectively.
Therefore, for $(x,t)\in \ol W \times [0,\infty)$,
\[
-(\|u_0\|_{L^\infty(\ol W)}+\|\theta\|_{L^\infty(\ol W)})+ \theta(x) \leq u(x,t) \leq (\|u_0\|_{L^\infty(\ol W)}+\|\theta\|_{L^\infty(\ol W)}) + \theta(x) .
\]
We then get the desired result.
\end{proof}

We now give a proof of Proposition \ref{lem:Sc-3}.
\begin{proof}[Proof of Proposition \ref{lem:Sc-3}]
Without loss of generality, assume $m_1=-m_2=1$.
 Let $W_1$ be the region inside $W$ enclosed by the arcs $\overset{\frown}{AB}, \overset{\frown}{CD}$ and $\partial B(a_1,r), \partial B(a_2,r)$.
 Let $W_2 = W \setminus \overline{W_1}$.
 Note that
 \[
\theta(x):= \arg(x-a_1) - \arg(x-a_2)
\]
 is well-defined in $W_1$ and $W_2$ separately.
 Let $v=\theta+2\pi k$ be such that, with an appropriate choice of branch cuts, we have for $w=v-\theta$, then
 \[
 w(x)=
 \begin{cases}
 2\pi \qquad &\text{ for } x\in W_1,\\
 0 \qquad &\text{ for } x\in W_2.
 \end{cases}
 \]
 See Figure \ref{fig.two-holes-only-proof}.
 
 \begin{center}
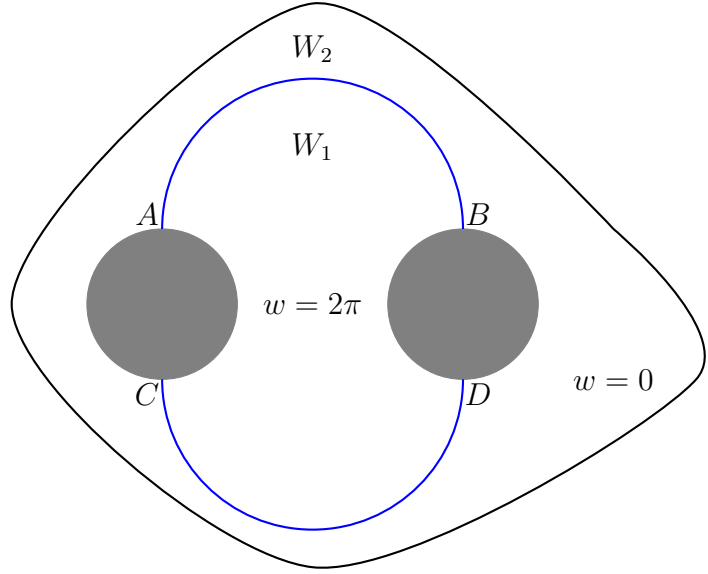
 
\begin{tikzpicture}
\draw[thick] plot [smooth] coordinates{(4,1) (0,4) (-4,0) (0,-3.5) (5.1,-1) (4,1)};
\draw[gray,  fill=gray](2,0) circle (1);
\draw[gray, fill=gray](-2,0) circle (1);
\draw[thick, blue] (2,1) arc(0:180:2);
\draw[thick, blue] (-2,-1) arc(180:360:2);
\fill (-2.2,1.2) node {$A$} (2.2,1.2) node {$B$} (-2.2,-1.2) node {$C$} (2.2,-1.2) node {$D$} (0,2.1) node{$W_1$} (0,3.4) node{$W_2$};
\fill (0,0) node {$w=2\pi$} (4,-1) node {$w=0$};
\end{tikzpicture}
\captionof{figure}{Values of $w$ in $W$ in Proposition \ref{lem:Sc-3}.}
\label{fig.two-holes-only-proof}
\end{center}

Surely, $v$ is discontinuous across the arcs $\overset{\frown}{AB}, \overset{\frown}{CD}$.
We now show that $v$ is a supersolution to \eqref{eq:C} by proving that $w=v-\theta$ is a supersolution to
\[
\begin{cases}
-|Dw| \left(\div\left(\frac{Dw}{|Dw|}\right) + \frac{1}{R_0} \right) = 0 \qquad &\text{ in } W,\\
Dw\cdot \mathbf{n} = 0 \qquad &\text{ on } \partial W.
\end{cases}
\]
We only need to check the supersolution property on the open arcs $\overset{\frown}{AB}, \overset{\frown}{CD}$ and at the boundary points $A,B,C,D$.

First, let $\phi$ be a smooth test function such that $\phi$ touches $w_*$ from below at a point $x_0 \in \overset{\frown}{AB} \cap W$.
Thanks to \cite[Lemma A.1]{GMT2}, we get that there exists $s \leq 0$ such that
\[
|D\phi(x_0)| = |s| \quad \text{ and } \quad |D\phi(x_0)| \div\left(\frac{D\phi(x_0)}{|D\phi(x_0)|}\right) \leq \frac{s}{R_0},
\]
which gives the desired supersolution test.

Second, let $\phi$ be a smooth test function such that $\phi$ touches $w_*$ from below at $A$.
Parameterize the arc $\overset{\frown}{AB}$ by a smooth curve $\xi$ such that $\xi(0)=A$ and $\xi(1)=B$.
Then we have $\phi(\xi(0))=0$ and $\phi(\xi(t)) \leq 0$ for all $t\in [0,1]$, which imply
\[
D\phi(\xi(0))\cdot \dot \xi(0) \leq 0.
\]
Therefore, at $A$,
\[
D\phi(A)\cdot \mathbf{n} \geq 0.
\]
Thus, $v$ is a supersolution to \eqref{eq:C}.
By the usual comparison principle, we yield that, for $(x,t)\in \ol W \times [0,\infty)$,
\[
u(x,t) \leq \|u_0\|_{L^\infty(\ol W)} + \|v\|_{L^\infty(\ol W)} + v(x).
\]
The proof is complete.
\end{proof}

We give a proof of Proposition \ref{lem:Sc-4}, which is similar to that of Proposition \ref{lem:Sc-3}.
\begin{proof}[Proof of Proposition \ref{lem:Sc-4}]
By our assumptions, we have that
 \[
\theta(x):= \arg(x-a_1)
\]
 is well-defined in $W_1$ and $W_2$ separately.
 Let $v=\theta+2\pi k$ be such that, with an appropriate choice of branch cuts, we have for $w=v-\theta$, then
 \[
 w(x)=
 \begin{cases}
 2\pi \qquad &\text{ for } x\in W_1,\\
 0 \qquad &\text{ for } x\in W_2.
 \end{cases}
 \]
 See Figure \ref{fig.one-hole-only-proof}.

 \begin{center}
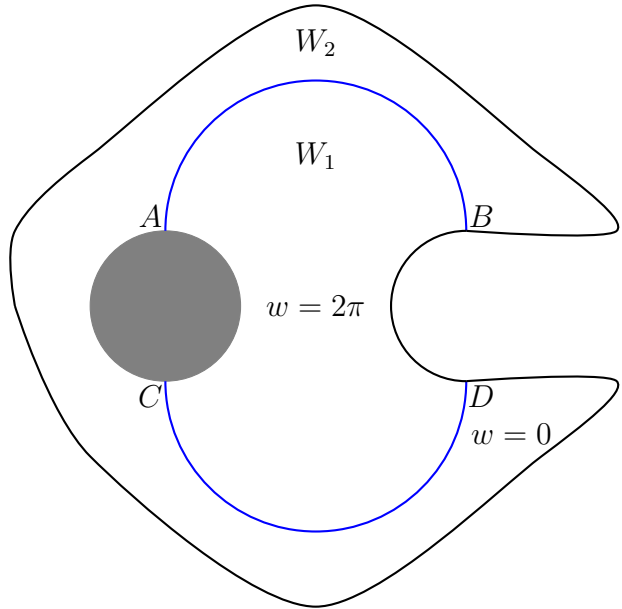

\begin{tikzpicture}
\draw[gray, fill=gray](-2,0) circle (1);
\draw[thick, blue] (2,1) arc(0:180:2);
\draw[thick, blue] (-2,-1) arc(180:360:2);
\fill (-2.2,1.2) node {$A$} (2.2,1.2) node {$B$} (-2.2,-1.2) node {$C$} (2.2,-1.2) node {$D$};
\draw[thick] (2,1) arc(90:270:1);
\draw[thick] plot [smooth] coordinates{(-4,0) (-3,-2) (0,-4) (3,-2) (4,-1) (2,-1)};
\draw[thick] plot [smooth] coordinates{(-4,0) (-4,1) (-3,2) (0,4) (3,2) (4,1) (2,1)};
\fill (0,2) node {$W_1$} (0,3.5) node{$W_2$} (0,0) node {$w=2\pi$} (2.6,-1.7) node {$w=0$};
\end{tikzpicture}
\captionof{figure}{Values of $w$ in $W$ in Proposition \ref{lem:Sc-4}.}
 \label{fig.one-hole-only-proof}
\end{center}

Of course, $v$ is discontinuous across the arcs $\overset{\frown}{AB}, \overset{\frown}{CD}$.
We now show that $v$ is a supersolution to \eqref{eq:C} by proving that $w=v-\theta$ is a supersolution to
\[
\begin{cases}
-|Dw| \left(\div\left(\frac{Dw}{|Dw|}\right) + \frac{1}{R_0} \right) = 0 \qquad &\text{ in } W,\\
Dw\cdot \mathbf{n} = 0 \qquad &\text{ on } \partial W.
\end{cases}
\]
We only need to check the supersolution property on the open arcs $\overset{\frown}{AB}, \overset{\frown}{CD}$ and at the boundary points $A,B,C,D$.

First, let $\phi$ be a smooth test function such that $\phi$ touches $w_*$ from below at a point $x_0 \in \overset{\frown}{AB} \cap W$.
Thanks to \cite[Lemma A.1]{GMT2}, we get that there exists $s \leq 0$ such that
\[
|D\phi(x_0)| = |s| \quad \text{ and } \quad |D\phi(x_0)| \div\left(\frac{D\phi(x_0)}{|D\phi(x_0)|}\right) \leq \frac{s}{R_0},
\]
which gives the desired supersolution test.

Second, we need to check the supersolution property at the boundary points $A,B,C,D$. 
It is enough to check it at $B$ as the proof is analogous for other points.
let $\phi$ be a smooth test function such that $\phi$ touches $w_*$ from below at $B$.
Parameterize the arc $\overset{\frown}{AB}$ by a smooth curve $\eta$ such that $\eta(0)=B$ and $\eta(1)=A$.
Then we have $\phi(\eta(0))=0$ and $\phi(\eta(t)) \leq 0$ for all $t\in [0,1]$, which imply
\[
D\phi(\eta(0))\cdot \dot \eta(0) \leq 0.
\]
Therefore, at $B$,
\[
D\phi(B)\cdot \mathbf{n} \geq 0.
\]
Thus, $v$ is a supersolution to \eqref{eq:C}.
By the usual comparison principle, we yield that, for $(x,t)\in \ol W \times [0,\infty)$,
\[
u(x,t) \leq \|u_0\|_{L^\infty(\ol W)} + \|v\|_{L^\infty(\ol W)} + v(x).
\]
The proof is complete.
\end{proof}

\subsection{Large time behaviors}
The proof of Theorem \ref{thm:C-E} follows that of  \cite[Theorem 1.2]{GTZ} or \cite[Theorem 1.3]{JKMT} by using a standard Lyapunov function, and is hence omitted.
We remark that assuming both \eqref{condition:c} and $S_c=0$ is really restrictive.

\smallskip

Let us now prove Proposition \ref{prop:C-E}.

\begin{proof}[Proof of Proposition \ref{prop:C-E}]
This can be done by direct computations. 
Set
\[
u(x,t) = g\left(\cR_{-c_0t} \frac{x}{|x|}\right) + c_0t \qquad \text{ for all } (x,t)\in \ol W \times [0,\infty).
\]
Then, for $(x,t)\in \ol W \times [0,\infty)$,
\begin{align*}
u_t &= \frac{d}{dt}(\cR_{-c_0t}) Dg \cdot \frac{x}{|x|} + c_0\\
&=c_0 \left( 1 - Dg \cdot \left (\cR_{\pi/2 -c_0t} \frac{x}{|x|} \right) \right)
=c_0 \left( 1 - Dg \cdot \left (\cR_{-c_0t} \frac{x^\perp}{|x|} \right) \right),
\end{align*}
and
\[
Du=\frac{\cR_{-c_0t}}{|x|} \left(Dg\cdot \frac{x^\perp}{|x|} \right) \frac{x^\perp}{|x|}.
\]
Therefore, for $(x,t)\in \ol W \times [0,\infty)$, we use the assumption that $\|Dg\|_{L^\infty}<1$ to yield
\[
|D(u-\theta)|= \frac{1}{|x|} \left( 1 - Dg \cdot \left (\cR_{-c_0t} \frac{x^\perp}{|x|} \right) \right).
\]
As $c(x)=c_0|x|$ and $\div(x^\perp/|x|)=0$, we imply the conclusion.
\end{proof}


\section{An example of a non-uniformly Lipschitz solution to \eqref{eq:C}} \label{sec:ex}
In this section we construct a non-uniformly Lipschitz continuous solution to \eqref{eq:C}.
Let us consider $c\equiv 1$, in which case \eqref{eq:C} becomes
\begin{equation}\label{eq:example}
\begin{cases}
u_t =|D(u-\theta)| \left(\Div\left(\frac{D(u-\theta)}{|D(u-\theta)|}\right)+1\right) \qquad &\text{ in } W\times(0,\infty), \\
D(u-\theta)\cdot \mathbf{n}=0  \qquad &\text{ on } \partial W\times(0,\infty),\\
u(\cdot,0)=u_0 \qquad &\text{ on }  \ol W,
\end{cases}
\end{equation}
for a domain $W\subset\R^2$ to be constructed. 
Fix $r>0$, and take $R>r$, where $R$ will be chosen later.  
Let $\bm{a}:=(-r,0)\in\partial B(0,r)$, take $\overline{\bm{a}}:=(-r,\overline{a}_2)\in\partial B(0,R)$ with $\overline{a}_2>0$. 
Note that $(\overline{\bm{a}}-\bm{a})\cdot\bm{a}=0$. 
Let $\overline{\bm{b}}:=(-\frac{r}{\sqrt{2}}, \frac{r}{\sqrt{2}})$. Take $\overline{\bm{b}}=(\overline{b}_1, \overline{b}_2)\in\partial B(0,R)$ 
so that 
\[
(\overline{\bm{b}}-\bm{b})\cdot\bm{b}=0, \quad \overline{b}_2>0. 
\]
Set $\overline{\bm{c}}:=(\overline{\bm{a}}+\overline{\bm{b}})/{2}$. 
Take $\bm{c}=(c_1,c_2)\in\partial B(0,r)$ so that 
\[
(\overline{\bm{c}}-\bm{c})\cdot\bm{c}=0, \quad c_1\in \left(-r,-\frac{r}{\sqrt{2}}\right). 
\]
Note that $\bm{a}, \overline{\bm{a}}, \bm{b}, \overline{\bm{b}}, \bm{c}, \overline{\bm{c}}$ are uniquely determined.  

Set $l:=|\bm{c}-\overline{\bm{c}}|$, and note that $l=l(R)$ depends on $R>0$. 
By the construction, it is clear to see that $l(R_1)<l(R_2)$ if $0<R_1<R_2$. 
Since $l(R)\to0$ as $R\to r$, $l(R)\to\infty$ as $R\to\infty$, 
there exists $R>0$ such that $l(R)=1$. 
Henceforth, we fix such a $R>0$. 
We set 
\[
\left[\overline{\bm{c}}, \overline{\bm{b}}\right]
:=
\left\{
\overline{\bm{d}}(s):=\overline{\bm{c}}+s\left(\overline{\bm{b}}-\overline{\bm{c}}\right)\,:\, s\in[0,1]
\right\}. 
\]
Take $\bm{d}(s)=(d_1(s),d_2(s))\in\partial B(0,r)$ so that 
\[
(\overline{\bm{d}}(s)-\bm{d}(s))\cdot\bm{d}(s)=0, \quad d_1(s)\in \left(c_1,-\frac{r}{\sqrt{2}}\right). 
\]
We define a family of arcs by 
\[
\Gamma(s):=
\left\{
\overline{\bm{d}}(s)+
\cR_\varphi
(\bm{d}(s)-\overline{\bm{d}}(s))
\,:\, \varphi\in\left[0,\frac{\pi}{2}\right]
\right\}. 
\]
Recall that
\[
\cR_\varphi=
\begin{pmatrix}
\cos\varphi & -\sin\varphi \\
\sin\varphi & \cos\varphi
\end{pmatrix}.
\]
It is important to note that $|\bm{d}(s)-\overline{\bm{d}}(s)|>1$ for $s \in (0,1]$.
Set the vector field by 
\[
\bm{n}(x):=\frac{x-\overline{\bm{d}}(s)}{|x-\overline{\bm{d}}(s)|}
\qquad\text{ for }  x\in\Gamma(s). 
\]
Note that, by construction, it is clear that 
$\bm{n}$ is smooth. Therefore, for a fixed $z\in\Gamma(0)$, there exists $t_0>0$ such that 
the initial value problem of the ordinary differential equation 
\[
\left\{
\begin{array}{ll} 
\dot{\bm{\gamma}}(t)=\bm{n}(\bm{\gamma}(t)) \qquad & \text{ for} \ t\in(0,t_0), \\
\bm{\gamma}(0)=z
\end{array}
\right. 
\]
has a solution at least for a short time. 
Write $\bm{\gamma}(t)=(\gamma_1(t), \gamma_2(t))$.
For  $t_0>0$ is small enough, $\gamma_2(t)>0$ for $t\in(0, t_0)$. 
Take $s_0\in(0,1)$ so that $\bm{\gamma}(t_0)\in\Gamma(s_0)$. 
Next, we extend $\bm{\gamma}=(\gamma_1, \gamma_2)$ to be a $C^2$ curve satisfying 
there exist $t_1<0<t_0<t_2$ such that 
\[
\left\{
\begin{array}{ll} 
\gamma_2(t)>0 \qquad \text{ for all} \ t\in(t_1,t_2), \\
\gamma_2(t_1)=\gamma_2(t_2)=0, \\
\dot \gamma_2(t_1)=\dot \gamma_2( t_2)=0. 
\end{array}
\right. 
\]
Finally, we define $\partial\Omega\subset\R^2$ by 
\begin{equation}\label{eq:Omega}
\partial\Omega:=
\left\{
(\gamma_1(t), \gamma_2(t))\,:\, t\in[t_1, t_2]
\right\}
\cup 
\left\{
(\gamma_1(t), -\gamma_2(t))\,:\, t\in[t_1, t_2]
\right\}, 
\end{equation}
and we denote by $W$ the domain enclosed by $\partial\Omega\cup\partial B(0,r)$. 
Here, we assume $N=1$, $a_1=0$, $m_1=1$.
Then, $\theta(x)=\arg(x)$.

 \begin{center} 
\includegraphics[width=4in]{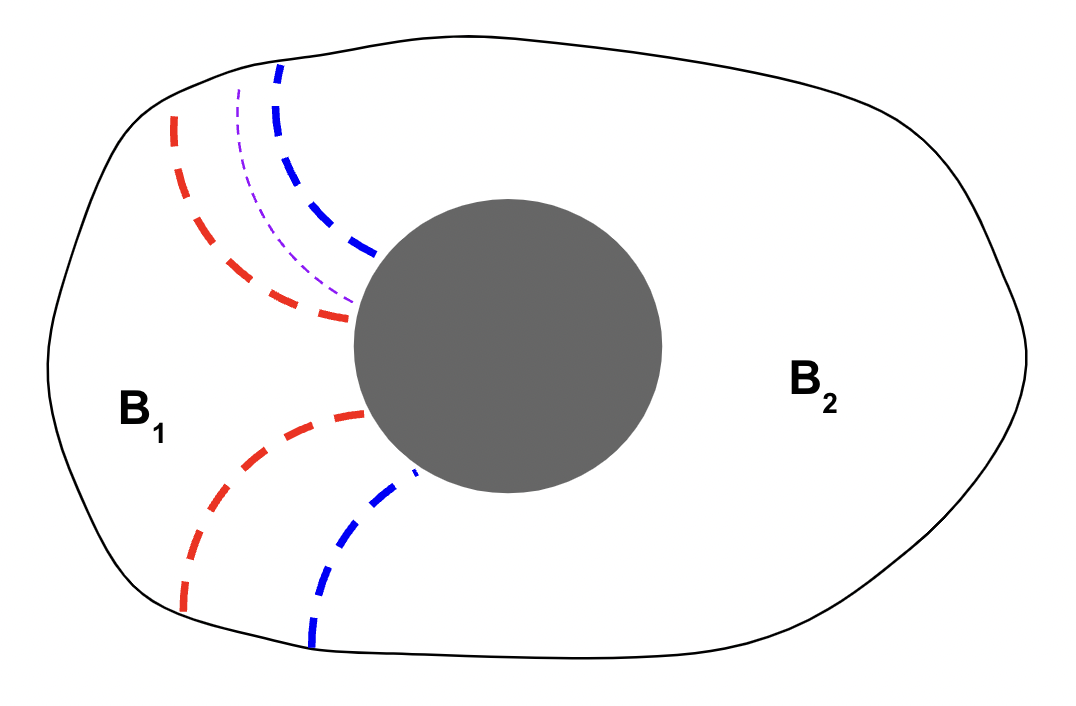}
\captionof{figure}{The constructed domain $W$.}
\label{fig.no-Lip}
\end{center}

Now, let $0<\alpha<\beta<2\pi$, and we define $u_0\in\Lip(\overline{W})$ by 
\begin{equation}\label{eq:u0}
u_0(x_1, x_2)-\theta(x):=
\left\{
\begin{array}{ll} 
\alpha+\frac{s}{s_0}(\beta-\alpha) \quad &\text{for} \ (x_1, x_2)\in\Gamma(s), \ s\in[0,s_0], \\
\alpha+\frac{s}{s_0}(\beta-2\pi-\alpha) \quad &\text{for} \ (x_1, x_2)\in\tilde{\Gamma}(s), \ s\in[0,s_0], 
\end{array}
\right. 
\end{equation}
where we set $\tilde{\Gamma}(s):=\{(x_1, -x_2)\,:\, (x_1, x_2)\in\Gamma(s)\}$.

We set 
\[
\cA:=\bigcup_{0\le s\le s_0}\Gamma(s), \qquad 
\tilde \cA:=\bigcup_{0\le s\le s_0}\tilde\Gamma(s).
\]
Take $x_1>r$ such that $\pm( x_1,0)\in\Omega$. 
We denote by $B_1$ and $B_2$ the connected component in $\ol W\setminus (\cA \cup\tilde \cA)$ which include $(-x_1,0)$ and $(x_1,0)$, respectively. 
We set 
\begin{equation}\label{eq:u01}
u_0(x_1, x_2)-\theta(x):=
\left\{
\begin{array}{ll} 
\alpha \qquad & \text{ for} \ (x_1, x_2)\in B_1 \\
\beta \qquad & \text{ for} \ (x_1, x_2)\in B_2.  
\end{array}
\right. 
\end{equation}
We note that $\Gamma(0)$ and $\tilde{\Gamma}(0)$ are exactly the two arcs $\overset{\frown}{AB}, \overset{\frown}{CD}$ of two circles of radii $1$ required in Proposition \ref{lem:Sc-4}.
For $s\in (0,s_0]$, $\Gamma(s)$ is an arc of a circle of radius greater than $1$.
Naturally, under the normal velocity $V=\kappa+1$, all the arcs $\Gamma(s)$ for $s\in (0,s_0]$ converge to $\Gamma(0)$ as time tends to infinity.
See Figure \ref{fig.no-Lip}.
By the above construction, the result in the following theorem follows immediately.
\begin{thm}\label{thm:no-Lip}
Assume $N=1$, $a_1=0$, $m_1=1$.
Let $\Omega$ be the set defined by \eqref{eq:Omega}, and let $u_0\in\Lip(\overline{W})$ 
be the function defined by \eqref{eq:u0} and \eqref{eq:u01}. 
Let $u$ be the solution to \eqref{eq:example}. Then, 
\[
\lim_{t\to\infty} (u(x,t) - \theta(x))
=
\left\{
\begin{array}{ll} 
\alpha \qquad & \text{ for} \ (x_1, x_2)\in B_1 \\
\beta \qquad & \text{ for} \ (x_1, x_2)\in W \setminus \overline{B_1}.  
\end{array}
\right. 
\]
In particular, $S_c=0$.
\end{thm}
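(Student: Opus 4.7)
The plan is to deduce the theorem from Proposition~\ref{lem:Sc-4} combined with Proposition~\ref{lem:Sc-4}-style stationary barriers and a level-set evolution argument.

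First, I would identify $\Gamma(0)$ and $\tilde\Gamma(0)$ with the pair of arcs required by Proposition~\ref{lem:Sc-4}: by construction they are arcs of circles of radius $l(R) = 1 = 1/\max_{\ol W} c$ (since $c \equiv 1$), perpendicular to $\partial B(0, r)$ (as encoded by $(\overline{\bm{c}} - \bm{c}) \cdot \bm{c} = 0$) and to $\partial\Omega$ (by the definition of the defining curve $\bm{\gamma}$), and they together with parts of $\partial B(0,r)$ and $\partial\Omega$ separate $W$ into $B_1$ and $W \setminus \overline{B_1}$, each admitting a single-valued branch of $\arg(x)$. Proposition~\ref{lem:Sc-4} then yields $S_c = 0$.

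Next, using a Proposition~\ref{lem:Sc-4}-style stationary supersolution $\tilde v$ defined by $\tilde v - \theta = \alpha$ on $B_1$ and $\tilde v - \theta = \beta$ on $W \setminus \overline{B_1}$ (the proof carries over since it depends only on the geometric convexity of the arcs of radius $R_0 = 1$, not on the jump magnitude $\beta - \alpha$ versus $2\pi$), together with its subsolution counterpart (with the jump direction reversed so the lower value sits on the convex side), the comparison principle yields $u - \theta \equiv \alpha$ on $B_1$ for all $t \geq 0$ and $\alpha - 2\pi \leq u - \theta \leq \beta$ on $W \setminus \overline{B_1}$. The initial ordering $u_0 \leq \tilde v$ follows from the monotone interpolation of \eqref{eq:u0}--\eqref{eq:u01} together with $0 < \alpha < \beta < 2\pi$, after fixing branches of $\theta$ consistent with the initial data.

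To obtain the convergence $u - \theta \to \beta$ on $W \setminus \overline{B_1}$, I would use the level-set interpretation of \eqref{eq:example}. Writing $w = u - \theta$ on a single-valued branch, for each $c \in (\alpha, \beta)$ the initial level set $\{w_0 = c\}$ is exactly $\Gamma(s(c)) \cup \tilde\Gamma(s(c))$ with $s(c) = s_0(c-\alpha)/(\beta-\alpha) \in (0, s_0)$, each an arc of radius $|\bm{d}(s(c)) - \overline{\bm{d}}(s(c))| > 1$. Under the forced mean curvature flow with normal velocity $V = \kappa + 1$ and the Neumann condition, circular arcs perpendicular to $\partial W$ evolve with radius $R(t)$ satisfying $\dot R = 1/R - 1$, whose unique stable equilibrium is $R = 1$; hence these level sets converge exponentially to $\Gamma(0) \cup \tilde\Gamma(0)$. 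For any $x \in W \setminus \overline{B_1}$ and any $c < \beta$, the evolving level set eventually lies strictly on the $B_1$-side of $x$, so $w(x, t) > c$; letting $c \nearrow \beta$ yields $\liminf_{t\to\infty} w(x,t) \geq \beta$, and combined with the upper bound gives $w(x,t) \to \beta$. Since $S_c = 0$ was established in the first step, the last assertion of the theorem follows. The main obstacle is the rigorous implementation of the level-set convergence under the Neumann boundary condition, together with careful bookkeeping of the branch cuts of $\theta$ when combining the barriers; this can be handled either via explicit time-dependent viscosity barriers tracking the evolving arcs or via the Barles--Souganidis generalized level-set framework adapted to the Neumann setting.
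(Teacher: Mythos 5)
The paper itself gives no proof of Theorem~\ref{thm:no-Lip}; it only refers the reader to \cite[Section~6]{JKMT} ``for detail on a rather similar result.'' Your outline is therefore not being measured against an in-paper argument, but against what such an argument must do. That said, you have correctly identified the main ingredients the construction was designed around: $\Gamma(0),\tilde\Gamma(0)$ are precisely the arcs of Proposition~\ref{lem:Sc-4} (radius $R_0=1$, perpendicularity to $\partial B(0,r)$ from $(\overline{\bm d}(s)-\bm d(s))\cdot\bm d(s)=0$, perpendicularity to $\partial\Omega$ by the definition of $\bm\gamma$), so $S_c=0$ does follow; and the stationary discontinuous barrier argument of Proposition~\ref{lem:Sc-4} is the right tool for the a priori two-sided bounds.

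There are, however, genuine gaps. First, your level-set convergence argument only tracks the level sets $\{w=c\}$ for $c\in(\alpha,\beta)$, which live in $\cA$ and move towards $\Gamma(0)$. But $W\setminus\overline{B_1}$ also contains $\tilde\cA$, where initially $w_0$ ranges over $[\beta-2\pi,\alpha]$; on that side the arcs have the opposite orientation relative to their centers, so the normal velocity is $V=1+1/R$ (bounded away from zero), the level sets \emph{move away} from $\tilde\Gamma(0)$ into $B_2$, and the spiral $\Gamma_t=\{u-\theta\in 2\pi\Z\}$ has to sweep once around the hole before $w$ can reach $\beta$ in $\tilde\cA$. Your proposal does not address this half of the dynamics at all, and it is not symmetric to the $\cA$ half (the convergence on the $\cA$ side is a balance $V\to 0$; on the $\tilde\cA$ side the level sets escape with speed $\geq 1$). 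Second, the assertion that ``$u-\theta\equiv\alpha$ on $B_1$ for all $t\geq 0$'' is stronger than what your barriers give and is not substantiated: the Proposition~\ref{lem:Sc-4}-type supersolution gives the upper bound $u-\theta\leq\alpha$ on $B_1$, but the ``subsolution counterpart with the jump direction reversed'' is not spelled out, and the reversed-jump configuration does not have the convexity/orientation needed for \cite[Lemma~A.1]{GMT2} at the same arcs. Finally, the phrase ``$\dot R=1/R-1$, whose unique stable equilibrium is $R=1$'' is only a heuristic: for a free circle under $V=\kappa+c$ the critical radius $1/c$ is \emph{unstable}; the convergence here is a one-sided effect of the constrained family $\{\Gamma(s)\}$ together with the Neumann condition, and turning this into an actual viscosity-solution barrier (time-dependent arcs perpendicular to $\partial W$, matching the nonconstant centers $\overline{\bm d}(s)$) is precisely the technical content of the argument that is being cited to \cite{JKMT} and is not supplied here.
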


We skip the proof of this theorem and we refer the reader to \cite[Section 6]{JKMT} for detail on a rather similar result.


\section*{Data availability}
Data sharing not applicable to this article as no datasets were generated or analyzed during the current study.

\section*{Conflict of interest}
There is no conflict of interest.

\end{document}